\definecolor{myblue}{HTML}{003388}
\definecolor{mygreen}{HTML}{338800}
\definecolor{myred}{HTML}{880033}
\newcommand{\leqnomode}{\tagsleft@true}
\newcommand{\reqnomode}{\tagsleft@false}
\newtheorem*{corintro*}{Corollary}
\newtheorem*{thm*}{Theorem}
\newtheorem*{lem*}{Lemma}
\newtheoremstyle{prim}{}{}{\normalfont}{}{\bfseries}{.}{ }{}
\newtheoremstyle{stil}{}{}{\slshape}{}{\bfseries}{.}{ }{}
\theoremstyle{stil}
\newtheorem{thm}{Theorem}[section]
\newtheoremstyle{defi}{}{}{}{}{\bfseries}{.}{ }{}
\theoremstyle{defi}
\theoremstyle{defi}
\newtheorem{rem}[thm]{Remark}
\theoremstyle{stil}
\newtheorem{pro}[thm]{Proposition}
\theoremstyle{stil}
\theoremstyle{stil}
\theoremstyle{prim}
\newenvironment{prf}{\noindent \textit{Proof.}}{\null\hfill$\qed$\hskip
2mm\vskip 2mm}
\newcommand{\g}{\mathfrak{g}}
\newcommand{\h}{\mathfrak{h}}
\newcommand{\nplus}{\mathfrak{n}_+}
\newcommand{\ntilde}{\widetilde{\mathfrak{n}}}
\newcommand{\htilde}{\widetilde{\mathfrak{h}}}
\newcommand{\gtilde}{\widetilde{\mathfrak{g}}}
\newcommand{\ntplus}{\widetilde{\mathfrak{n}}_+}
\newcommand{\gtgeq}{\widetilde{\mathfrak{g}}^{\geqslant 0}}
\newcommand{\gtle}{\widetilde{\mathfrak{g}}^{< 0}}
\newcommand{\ntplusgeq}{\widetilde{\mathfrak{n}}_+^{\geqslant 0}}
\newcommand{\ntplusleq}{\widetilde{\mathfrak{n}}_+^{< 0}}
\newcommand{\ot}{\otimes}
\newcommand{\CC}{\mathbb{C}}
\newcommand{\ZZ}{\mathbb{Z}}
\newcommand{\At}{A_{\theta}}
\newcommand{\NN}{\mathbb{N}}
\newcommand{\Cc}{\mathcal{C}}
\newcommand{\Ec}{\mathcal{E}}
\newcommand{\Dc}{\mathcal{D}}
\newcommand{\BN}{B_{N(k\Lambda_0)}}
\newcommand{\BL}{B_{L(k\Lambda_0)}}
\newcommand{\BV}{B_{V}}
\newcommand{\BNc}{\mathfrak{B}_{N(k\Lambda_0)}}
\newcommand{\BLc}{\mathfrak{B}_{L(k\Lambda_0)}}
\newcommand{\BNcbar}{\bar{\mathfrak{B}}_{N(k\Lambda_0)}}
\newcommand{\BLcbar}{\bar{\mathfrak{B}}_{L(k\Lambda_0)}}
\newcommand{\BVc}{\mathfrak{B}_{V}}
\newcommand{\BVcbar}{\bar{\mathfrak{B}}_{V}}
\newcommand{\fand}{\quad\text{and}\quad}
\newcommand{\Fand}{\qquad\text{and}\qquad}
\newcommand{\non}{\nonumber}
\newcommand{\beq}{\begin{equation}}
\newcommand{\eeq}{\end{equation}}
\newcommand{\f}{f_{L(k\Lambda_0)}}
\newcommand{\fN}{f_{N(k\Lambda_0)}}
\newcommand{\fV}{f_{V}}
\newcommand{\fVbar}{\bar{f}_{V}}
\newcommand{\W}{W_{L(k\Lambda_0)}}
\newcommand{\WN}{W_{N(k\Lambda_0)}}
\newcommand{\I}{I_{L(k\Lambda_0)}}
\newcommand{\IN}{I_{N(k\Lambda_0)}}
\newcommand{\IV}{I_{V}}
\newcommand{\vmax}{v_{L(k\Lambda_0)}}
\newcommand{\LLL}{L(k\Lambda_0)}
\newcommand{\NNN}{N(k\Lambda_0)}
\newcommand{\ndo}{\mathop{\mathrm{End}}}
\newcommand{\ch}{\mathop{\mathrm{ch}}}
\newcommand{\om}{\mathop{\mathrm{Hom}}}
\newcommand{\rez}{\mathop{\mathrm{Res}}}
\newcommand{\wvr}{\overline}
\newcommand{\wtld}{\widetilde}
\newcommand{\hhat}{\widehat{\mathfrak{h}}}
\newcommand{\hhle}{\widehat{\mathfrak{h}}^{< 0}}
\newcommand{\BLcc}{\mathfrak{B}_{L(\Lambda)}}
\newcommand{\BLccbar}{\bar{\mathfrak{B}}_{L(\Lambda)}}
\newcommand{\BLL}{B_{L(\Lambda)}}
\begin{document}

\title{Principal subspaces for the affine Lie algebras in types $D$, $E$ and $F$}

\author{Marijana Butorac$^1$}
\address{$^1$ Department of Mathematics, University of Rijeka, Radmile Matej\v{c}i\'{c} 2, 51\,000 Rijeka, Croatia}
\email{mbutorac@math.uniri.hr}

\author{Slaven Ko\v{z}i\'{c}$^2$}
\address{$^2$ Department of Mathematics, Faculty of Science, University of Zagreb,  Bijeni\v{c}ka cesta 30, 10\,000 Zagreb, Croatia}
\email{kslaven@math.hr}

\subjclass[2000]{Primary 17B67; Secondary 17B69, 05A19}

\keywords{affine Lie algebras, combinatorial bases, principal subspaces, quasi-particles, vertex operator algebras}

\begin{abstract} 
We consider the principal subspaces of  certain level  $k\geqslant 1$ integrable highest weight modules and generalized Verma modules for the  untwisted affine Lie algebras in types $D$, $E$ and $F$.
Generalizing the approach of G. Georgiev we  construct their quasi-particle bases.
We use the  bases to derive    presentations of the principal subspaces, calculate their character formulae and find some new combinatorial identities.
\end{abstract}

\maketitle


\section{Introduction}\label{sec00}
\allowdisplaybreaks

Starting with   J. Lepowsky and S. Milne \cite{LM}, the fascinating connection between Rogers--Ramanujan-type identities  and affine Kac--Moody Lie algebras was extensively studied; see, e.g., \cite{LP,LW,MP,M} and references therein.
The principal subspaces of  standard modules, i.e. of integrable highest weight modules for the affine Lie algebras, introduced by B. L. Feigin and A. V. Stoyanovsky  \cite{FS}, present a remarkable example of this interplay between combinatorics and algebra.
In particular, their so-called quasi-particle bases 
 provide an interpretation  of the sum sides of various Rogers--Ramanujan-type identities; see \cite{B1,B2,B3,BS,FS,G,MiP}.
Aside from  quasi-particle bases,  numerous  research directions are focused on other  aspects of  principal subspaces and related structures such as 
certain generalized   principal subspaces \cite{AKS}, 
Feigin--Stoyanovsky's type subspaces \cite{BPT,JP,P},
realizations of  Jack symmetric functions \cite{CJ}, 
presentations of principal subspaces \cite{CLM1,CLM2,CLM3,CPS,PS,PS2,S1,S2}, 
Rogers--Ramanujan-type recursions \cite{CapLM1,CapLM2},
Koszul complexes \cite{Kan}, 
principal subspaces for quantum affine algebras and double Yangians \cite{c01,c05,c10}
 etc.  The key ingredient that all the aforementioned studies have in common is the application of   vertex-operator theoretic methods.
	
Let $\Lambda_0,\ldots ,\Lambda_l$ be the fundamental weights of the untwisted affine Lie algebra $\gtilde$ associated with the simple Lie algebra $\g$ of rank $l$.	
In this paper, we consider the principal subspaces $W_{\NNN}$ of the generalized Verma modules $\NNN$ and  the principal subspaces $W_{\LLL}$ of the standard modules $\LLL$ of highest weights $k\Lambda_0$ for  $\gtilde$ in types $D$, $E$ and $F$. The main result is a construction of the quasi-particle bases  $\BNc$ and $\BLc$ of the corresponding principal subspaces:

\makeatletter
\def\thmhead@plain#1#2#3{%
  \thmname{#1}\thmnumber{\@ifnotempty{#1}{ }\@upn{#2}}%
  \thmnote{ {\the\thm@notefont#3}}}
\let\thmhead\thmhead@plain
\makeatother

\begin{thm*}[\textbf{\ref{thm_baza}}]
{\slshape For any positive integer $k$ the set $\BVc$ forms a basis of the principal subspace $W_V$ of the $\gtilde$-module $V=N(k\Lambda_0),L(k\Lambda_0)$.}
\end{thm*}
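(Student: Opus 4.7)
The plan is to follow the two-step scheme initiated by Georgiev for type $A$ and subsequently adapted to other classes in \cite{B1,B2,B3,BS}: first show that $\BVc$ spans $W_V$, then establish its linear independence. The monomials in $\BVc$ are products of quasi-particles $x_{n\alpha_i}(m)$ indexed by simple roots $\alpha_i$ (the ``colors''), charges $n\in\NN$, and spectral energies $m\in\ZZ$, arranged so that within each color the charges are weakly decreasing and both intra- and inter-color difference conditions on the energies are satisfied.

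For the spanning step I would argue by induction on a total order on quasi-particle monomials, say by total charge and then by lex order of energies. The reductions rest on three families of relations: (a) the same-color identity $(z_1-z_2)^{2n_1 n_2}[x_{n_1\alpha_i}(z_1),x_{n_2\alpha_i}(z_2)]=0$, which enforces the intra-color difference-two type constraints; (b) the different-color identity with exponent $-\langle\alpha_i,\alpha_j\rangle\, n_1 n_2$, which governs the inter-color constraints; and (c) for $V=\LLL$, the integrability relation $x_\theta(z)^{k+1}=0$ at the highest root $\theta$, bounding the admissible charges by $k$. Iterated application of these, combined with the commutation of $\nplus$ past the generating fields used to land in the principal subspace, rewrites any element of $W_V$ as a combination of elements of $\BVc$.

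For linear independence I would adapt Georgiev's projection technique, organized as an induction on the charge vector in reverse lex order. The main ingredients are the Frenkel--Kac simple-current-type operators $e^{\lambda}$ shifting weights, together with the coefficients of suitable intertwining operators on the tensor product $\LLL^{\otimes k}$. A carefully chosen projection isolates the leading quasi-particle in a hypothetical relation by concentrating one color into a single tensor factor; the remaining factors then constitute a relation in a principal subspace at strictly smaller total charge, closing the induction. The $\NNN$ case then follows either by an analogous but simpler argument (no integrability bound) or by exactness of an appropriate short resolution relating $W_{\NNN}$ and $W_{\LLL}$.

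The main obstacle is twofold. In types $D$ and $E$ the Dynkin diagram permits long chains of adjacent colors, so the order of colors and the allowed interaction patterns along the chain have to be calibrated carefully in order for the spanning reductions and the projection arguments to match the character formula encoded by $\BVc$. In type $F$, the non-simply-laced structure introduces the Cartan integer $-2$ between short and long simple roots, which changes the exponent in relation (b) and forces separate treatment of short- and long-root quasi-particles in the integrability reduction (c). Fine-tuning the definition of $\BVc$ so that all these compatibilities hold uniformly across the three families is where the real work of the argument lies.
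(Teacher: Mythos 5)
Your overall two-step scheme (spanning via quasi-particle relations, independence via Georgiev-type projections) is the paper's scheme, but two of your key technical inputs would not carry the argument as stated. For the spanning step, your relation (a) is vacuous: quasi-particles of the same color literally commute, so $(z_1-z_2)^{2n_1n_2}[x_{n_1\alpha_i}(z_1),x_{n_2\alpha_i}(z_2)]=0$ gives no reduction at all; what drives the intra-color conditions \eqref{d1} are the derivative relations \eqref{r1} of Feigin--Stoyanovsky/Georgiev/Jerkovi\'c--Primc, which trade a block of $2n_2$ monomials for monomials satisfying the difference-two-type condition plus terms of strictly higher charge-type. Similarly, your inter-color relation (b) with exponent $-\langle\alpha_i,\alpha_j\rangle n_1 n_2$ is true but too weak: the paper needs the sharper commutation \eqref{r2} with exponent $\min\{\frac{\nu_i}{\nu_{i'}}n_{i'},n_i\}$, and it is exactly this minimum that produces the $\min$-sums in condition \eqref{d2}; with the naive exponent you cannot reduce to the set $\BVc$. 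Finally, the charge bound comes from the Lepowsky--Primc integrability $x_{(k\nu_i+1)\alpha_i}(z)=0$ at each simple root, giving $n_{p,i}\leqslant k\nu_i$ as in \eqref{d3}; your bound ``$k$'' via $x_\theta(z)^{k+1}=0$ is wrong for the short roots of $F_4$, where the bound is $2k$.

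For linear independence your proposed machinery (Frenkel--Kac operators $e^{\lambda}$ and coefficients of lattice intertwining operators on $L(\Lambda_0)^{\otimes k}$) matches the paper only in types $D$ and $E$, where the level-one modules are lattice modules $V_Q e^{\lambda_i}\subset V_P$ and one can use $A_{\lambda_i}=\rez_z z^{-1}I(e^{\lambda_i},z)$ together with $e_{\lambda_i}$. In type $F_4^{(1)}$ this machinery does not exist (there are no level-one fundamental weights besides $\Lambda_0$, and no weight-lattice construction of the basic module), and your plan offers no substitute; the paper instead uses the constant term $A_\theta=x_\theta(-1)$ of the maximal-root vertex operator together with the Weyl translation operator $e_\theta$, exploiting $\alpha_i(\theta^\vee)=\delta_{1i}$ to raise the energies of color-$1$ quasi-particles. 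You also leave the closure of the induction unspecified: the paper does not merely pass to ``smaller total charge'' but removes all color-$1$ quasi-particles and identifies the resulting vectors with elements of previously established quasi-particle bases of smaller rank — type $C_3^{(1)}$ in the $F_4$ case and eventually $A_1^{(1)}$ in types $D$, $E$ — and it is this appeal to the known bases of \cite{B2}, \cite{FS,G} that forces the leading coefficient to vanish. Without these two ingredients the independence argument, especially in type $F$, does not go through.
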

The   bases are expressed in terms of monomials of certain operators, called quasi-particles, applied on the highest weight vector, whose charges and energies satisfy certain difference conditions.
Theorem \ref{thm_baza} for $\g$ of type $A_1$ goes back to   Feigin and Stoyanovsky  \cite{FS}. 
The $\g=A_l$   case was proved by G. Georgiev \cite{G} for all rectangular weights, i.e. for all integral dominant highest weights $\Lambda=k_0\Lambda_0+k_j\Lambda_j$.
The bases $\BVc$ for $\g=B_l, C_l, G_2$   were obtained by the first author  in  \cite{B1,B2,B3}. 
The $\g=A_l$  case for basic  modules   can be also recovered from   the recent result of K. Kawasetsu  \cite{Kawa}.
Our proof of Theorem \ref{thm_baza} in types $D$, $E$ and $F$ follows the approach in \cite{G} and relies on  \cite{B1,B2,JP}. 
In addition to  Theorem \ref{thm_baza}, in  Theorem \ref{thm_baza_DE} we construct quasi-particle bases of the principal subspaces $W_{L(\Lambda)}$ for all rectangular highest  weights $\Lambda$  in types $D$ and $E$, thus generalizing      \cite{G}.

Next, in Theorem \ref{thm_prezentacija}, we derive   presentations of the principal subspaces $W_{\LLL}$ for all types of $\g$.
The presentations of principal subspaces of standard $\gtilde$-modules $L(\Lambda)$ for the level $k$ integral dominant highest weights $\Lambda$ were established by
Feigin and Stoyanovsky \cite{FS} for $\g=A_1$ and $k= 1$.
Furthermore, the presentations were proved by
 C. Calinescu, Lepowsky and A. Milas  \cite{CLM1,CLM2,CLM3} 
for $\g=A_1$ and $k\geqslant 1$ and for $\g=A,D,E$ and $k=1$, and by C.  Sadowski   \cite{S1}  for $\g=A_2$ and $k\geqslant 1$. As explained in \cite{CLM1}, these a priori  proofs do not rely on the  detailed underlying structure, such as bases of the standard modules or of the principal subspaces.
Finally, Sadowski \cite{S2}  proved
 the   general case $\g=A_l$ for all $k\geqslant 1$   using  Georgiev's quasi-particle bases  \cite{G}.  
In contrast with \cite{CLM1,CLM2,CLM3,S1}, our proof   employs the sets $\BLc$ from Theorem \ref{thm_baza}, thus solving a simpler problem. 
In addition, using the quasi-particle bases from Theorem \ref{thm_baza_DE} we   obtain    presentations of the principal subspaces $W_{L(\Lambda)}$ for  all rectangular highest  weights $\Lambda$ in types $D$ and $E$; see Theorem \ref{thm_prezentacija_DE}. It is worth noting that, aside from the aforementioned cases covered in \cite{CLM1,CLM2,CLM3,S1}, the a priori proof  of  these presentations, which were originally conjectured in \cite{S2}, is still lacking. 

In the end, we use the bases from Theorems \ref{thm_baza} and \ref{thm_baza_DE} to explicitly write the character formulae for the principal subspaces. In particular, by regarding two different bases for $W_{\NNN}$ in types $D$, $E$ and $F$, we obtain three new families of combinatorial identities.

\section{Preliminaries}\label{sec10}
\numberwithin{equation}{section}

Let $\g$ be a complex simple Lie algebra of rank $l$
equipped with a nondegenerate invariant  symmetric bilinear  form $\langle \cdot,\cdot \rangle$ and let
  $\h $ be its Cartan subalgebra.
As the restriction of the form 	$\langle \cdot,\cdot \rangle$ on $\h$ is nondegenerate, it defines a symmetric bilinear  form on the dual $ \mathfrak{h}^{\ast}$. 
Let
$\Pi=\left\{\alpha_1,\ldots ,\alpha_l\right\}\subset\h^*$ be the basis of the root system $R$ of $\g$ with respect to $\h$ and let $x_\alpha\in\g$ with $\alpha\in R$ be the root vectors. The simple roots $\alpha_1,\ldots ,\alpha_l$ are labelled\footnote{\label{note1}
In contrast with \cite{H} and \cite[Table Fin]{K}, we reverse the labels in the Dynkin diagram of type $C_l$ in Figure \ref{figure}, so that the root lengths in the sequence  $\alpha_1,\ldots ,\alpha_l$   increase for all types of $\g$, thus getting a simpler formulation of Theorem \ref{thm_baza}.
} as in Figure \ref{figure}. 
We denote by $\alpha_1^{\vee},\ldots ,\alpha_l^{\vee}$ the corresponding simple coroots.
Let $\lambda_1,\ldots, \lambda_l\in\h^*$ be the  fundamental weights, i.e. the weights such that $\left<\lambda_i,\alpha_j\right>=\delta_{ij}$. Let $Q=\sum_{i=1}^l \ZZ\alpha_i$ and $P=\sum_{i=1}^l \ZZ\lambda_i$ be the root lattice and the weight lattice  of $\g$ respectively. 
We assume that the  form $\langle \cdot,\cdot \rangle$ is normalized so that $\langle \alpha,\alpha \rangle=2$ for every long root $\alpha\in R$. Hence, in particular, we have $\langle \alpha_i,\alpha_i \rangle\in \left\{2/3, 1, 2\right\}$ for all $i=1,\ldots ,l$.
Denote by $R_+$ and $R_{-}$    the sets of positive and negative roots. Let
$$\mathfrak{g} =\mathfrak{n}_{-}\oplus \mathfrak{h}\oplus \mathfrak{n}_{+},\qquad\text{where}\qquad \mathfrak{n}_{\pm }=\bigoplus_{\alpha \in R_{\pm}}\mathfrak{n}_{\alpha}\fand \mathfrak{n}_{\alpha} =\mathbb{C}x_{\alpha}\text{ for all }\alpha\in R,$$
be the triangular decomposition of $\g$; see \cite{H} for more details on simple Lie algebras.

\tikzset{node distance=1.8em, ch/.style={circle,draw,on chain,inner sep=2pt},chj/.style={ch,join},every path/.style={shorten >=5pt,shorten <=5pt},line width=1pt,baseline=-1ex}

\newcommand{\alabel}[1]{%
  \(\alpha_{\mathrlap{#1}}\)
}

\newcommand{\mlabel}[1]{%
  \(#1\)
}

\let\dlabel=\alabel
\let\ulabel=\mlabel

\newcommand{\dnode}[2][chj]{%
\node[#1,label={below:\dlabel{#2}}] {};
}

\newcommand{\dnodenj}[1]{%
\dnode[ch]{#1}
}

\newcommand{\dnodebr}[1]{%
\node[chj,label={right:\dlabel{#1}}] {};
}

\newcommand{\dydots}{%
\node[chj,draw=none,inner sep=1pt] {\dots};
}

\newcommand{\QRightarrow}{%
\begingroup
\tikzset{every path/.style={}}%
\tikz \draw (0,3pt) -- ++(1em,0) (0,1pt) -- ++(1em+1pt,0) (0,-1pt) -- ++(1em+1pt,0) (0,-3pt) -- ++(1em,0) (1em-1pt,5pt) to[out=-75,in=135] (1em+2pt,0) to[out=-135,in=75] (1em-1pt,-5pt);
\endgroup
}

\newcommand{\QLeftarrow}{%
\begingroup
\tikz
\draw[shorten >=0pt,shorten <=0pt] (0,3pt) -- ++(-1em,0) (0,1pt) -- ++(-1em-1pt,0) (0,-1pt) -- ++(-1em-1pt,0) (0,-3pt) -- ++(-1em,0) (-1em+1pt,5pt) to[out=-105,in=45] (-1em-2pt,0) to[out=-45,in=105] (-1em+1pt,-5pt);
\endgroup
}

\begin{align*} 
&A_l   &&\hspace{-15pt}
\begin{tikzpicture}[start chain]
\dnode{1}
\dnode{2}
\dydots
\dnode{l-1}
\dnode{l}
\end{tikzpicture}
&&B_l  &&\hspace{-15pt}
\begin{tikzpicture}[start chain]
\dnode{1}
\dnode{2}
\dydots
\dnode{l-1}
\dnodenj{l}
\path (chain-4) -- node[anchor=mid] {\(\Rightarrow\)} (chain-5);
\end{tikzpicture}
 \\\\
&C_l  &&\hspace{-15pt}
\begin{tikzpicture}[start chain]
\dnode{l}
\dnode{l-1}
\dydots
\dnode{2}
\dnodenj{1}
\path (chain-4) -- node[anchor=mid] {\(\Leftarrow\)} (chain-5);
\end{tikzpicture}
&&D_l  &&\hspace{-15pt}
\begin{tikzpicture}
\begin{scope}[start chain]
\dnode{1}
\dnode{2}
\node[chj,draw=none] {\dots};
\dnode{l-2}
\dnode{l-1}
\end{scope}
\begin{scope}[start chain=br going above]
\chainin(chain-4);
\dnodebr{l}
\end{scope}
\end{tikzpicture}
 \\\\
&E_6 &&\hspace{-15pt}
\begin{tikzpicture}
\begin{scope}[start chain]
\foreach \dyni in {1,...,5} {
\dnode{\dyni}
}
\end{scope}
\begin{scope}[start chain=br going above]
\chainin (chain-3);
\dnodebr{6}
\end{scope}
\end{tikzpicture}
&&E_7 &&\hspace{-15pt}
\begin{tikzpicture}
\begin{scope}[start chain]
\foreach \dyni in {1,...,6} {
\dnode{\dyni}
}
\end{scope}
\begin{scope}[start chain=br going above]
\chainin (chain-3);
\dnodebr{7}
\end{scope}
\end{tikzpicture}
 \\\\
&E_8 &&\hspace{-15pt}
\begin{tikzpicture}
\begin{scope}[start chain]
\foreach \dyni in {1,...,7} {
\dnode{\dyni}
}
\end{scope}
\begin{scope}[start chain=br going above]
\chainin (chain-5);
\dnodebr{8}
\end{scope}
\end{tikzpicture}
&&F_4 &&\hspace{-15pt}
\begin{tikzpicture}[start chain]
\dnode{1}
\dnode{2}
\dnodenj{3}
\dnode{4}
\path (chain-2) -- node[anchor=mid] {\(\Rightarrow\)} (chain-3);
\end{tikzpicture}
 \\\\
&G_2 &&\hspace{-15pt}
\begin{tikzpicture}[start chain]
\dnodenj{1}
\dnodenj{2}
\path (chain-1) -- node {\(\Rrightarrow\)} (chain-2);
\end{tikzpicture}
\end{align*}
\begingroup\vspace*{-\baselineskip}

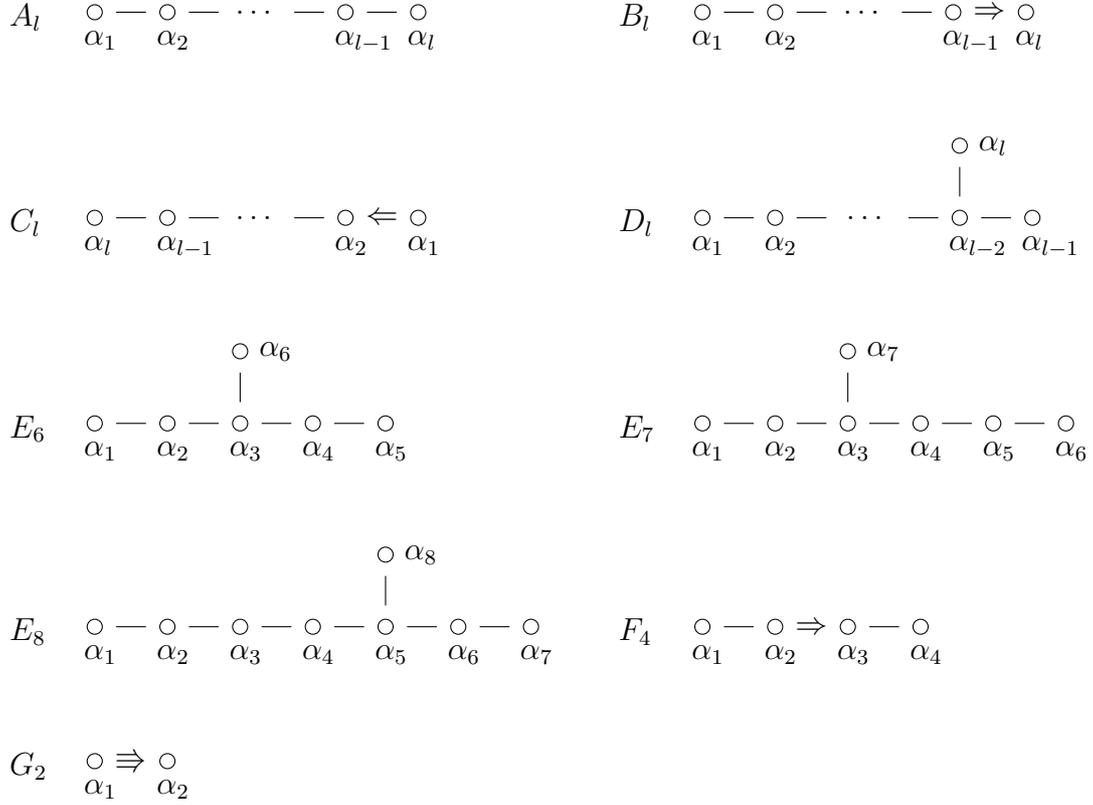
\captionof{figure}{Finite Dynkin diagrams}\label{figure}
\vspace*{\baselineskip}\endgroup

The  affine Kac--Moody Lie algebra $\gtilde$  associated to $\mathfrak{g}$ is defined by
$$\widetilde{ \mathfrak{g} }=\mathfrak{g}\otimes \mathbb{C}[t,t^{-1}]\oplus \mathbb{C}c\oplus \mathbb{C}d,$$
where the elements $x(m)=x\ot t^m$ for $x\in \g$ and $m\in\ZZ$ are subject to  relations
\begin{gather}
 \left[c,\widetilde{ \mathfrak{g}}\right]=0,\qquad \left[d,x(m)\right]=mx(m),\non\\
\left[x(m),y(n)\right]= \left[x, y\right](m+n) + \left\langle x, y \right\rangle m \delta_{m+n\,0}\, c.\label{defrel}
\end{gather}
We denote by $\alpha_0,\alpha_1,\ldots ,\alpha_l$ and $\alpha_0^\vee,\alpha_1^{\vee},\ldots ,\alpha_l^\vee$ the simple roots and the simple coroots of $\gtilde$. 
Let $\Lambda_i$ be the fundamental weights of $\gtilde$, i.e. the weights such that  $\Lambda_i (d)=0$ and $\Lambda_i (\alpha_j^{\vee})=\delta_{ij}$  for all $i,j=0,\ldots , l$. For more details on affine Lie algebras see \cite{K}.

Let $k_0,\ldots ,k_l$ be nonnegative integers such that $k=k_0+\ldots +k_l$ is positive and let $\lambda=k_1\lambda_1 +\ldots +k_l \lambda_l$. Denote by $U_{\lambda}$ the finite-dimensional irreducible $\g$-module of  highest weight $\lambda$.
The generalized Verma $\gtilde$-module $N(\Lambda)$   of  highest weight $\Lambda=k_0\Lambda_0+\ldots +k_l\Lambda_l$  and of level $k$ is defined as the induced $\gtilde$-module
$$N(\Lambda)= U(\gtilde)\ot_{U(\gtgeq)} U_{\lambda},$$
where  the action of the Lie algebra
$$\gtgeq=\bigoplus_{n\geqslant 0} (\g\ot t^n) \oplus\CC c\oplus\CC d $$
on $U_{\lambda}$ is given by
$$
\g\ot t^n \cdot u=0\text{ for all }n> 0,\quad c\cdot u=k u\fand d\cdot u=0\qquad\text{for all }u\in U_{\lambda}.
$$
Denote by  $L(\Lambda)$  the standard $\widetilde{\mathfrak{g}}$-module  of   highest weight $ \Lambda $ and of level $k$, i.e. the integrable highest weight $\widetilde{\mathfrak{g}}$-module which equals the unique simple quotient of the generalized Verma module $N(\Lambda)$. In particular, for $\lambda=0$ we obtain the
generalized Verma $\gtilde$-module $N(k\Lambda_0)$   of  highest weight $k\Lambda_0$ and  level $k=k_0$ which  possesses a vertex operator algebra structure.
Moreover,  $L(k\Lambda_0)$   is  a simple vertex operator algebra and the level $k$ standard $\widetilde{\mathfrak{g}}$-modules are  $L(k\Lambda_0)$-modules; see, e.g., \cite{LL,MP}.
Finally, recall that  
Poincar\'{e}--Birkhoff--Witt theorem for the universal enveloping algebra implies the vector space isomorphism
$$N(k\Lambda_0)\cong U(\gtle),\quad\text{where}\quad \gtle=\bigoplus_{n < 0} (\g\ot t^n) .$$
For more details on the  representation theory of affine Lie algebras see   \cite{K}.

\section{Quasi-particle bases of principal subspaces}\label{sec20}

In this section, we state  our main results, Theorems \ref{thm_baza} and   \ref{thm_baza_DE}. 

\subsection{Quasi-particles}\label{subsec21}

Introduce  the following subalgebras of $\gtilde$:
$$
 \ntplus=\mathfrak{n}_{+} \otimes \mathbb{C}[t,t^{-1}],\qquad 
 \ntplusgeq=\nplus \otimes \mathbb{C}[t]\Fand 
 \ntplusleq=\nplus \otimes t^{-1}\mathbb{C}[t^{-1}].
$$
Let $\Lambda$ be an arbitrary integral dominant weight of $\gtilde$.
Denote by $V$   the generalized Verma module $N( \Lambda )$ or the standard module $L(\Lambda )$ with a highest weight vector $v_V $.  Following Feigin and Stoyanovsky  \cite{FS}, we define the {\em principal subspace} $W_V$ of $V$ by
\begin{equation*} 
W_V=U(\ntplus)  v_{V}.
\end{equation*}

Consider the vertex operators
$$x_{\alpha_i}(z)=\sum_{m\in\ZZ} x_{\alpha_i}(m) z^{-m-1}\in \om(V,V((z)))\subset (\ndo V)[[z^{\pm 1}]], \quad  i=1,\ldots ,l.$$
Note that \eqref{defrel} implies $[x_{\alpha_i}(z_1),x_{\alpha_i}(z_2)]=0$ so that
\beq\label{quasi}
x_{n\alpha_i}(z)=\sum_{m\in\ZZ} x_{n\alpha_i}(m) z^{-m-n}=\underbrace{x_{\alpha_i}(z)\cdots x_{\alpha_i}(z)}_{n\text{ times}}=x_{\alpha_i}(z)^n
\eeq
is a well-defined element of $\om(V,V((z)))$ for all $n\geqslant 1$. In fact, $x_{n\alpha_i}(z)$ is the vertex operator associated with the vector $x_{\alpha_i}(-1)^n v_V \in V$.
As in \cite{G}, define the {\em quasi-particle of color $i$, charge $n$ and  energy $-m$} as the coefficient  $x_{n\alpha_i}(m)\in\ndo V$ of \eqref{quasi}. 

Consider the quasi-particle monomial
\beq\label{monomial}\tag{$m$}
b=
\left(x_{n_{r_{l}^{(1)},l}\alpha_{l}}(m_{r_{l}^{(1)},l})\ldots x_{n_{1,l}\alpha_{l}}(m_{1,l})\right)\ldots 
\left( x_{n_{r_{1}^{(1)},1}\alpha_{1}}(m_{r_{1}^{(1)},1})\ldots  x_{n_{1,1}\alpha_{1}}(m_{1,1})\right)
\eeq
in $\ndo V$.
Note that the quasi-particle colors in \eqref{monomial} are increasing from right to left and that the integers $r_j^{(1)}\geqslant 0$ with $j=1,\ldots ,l$  denote the parts of the conjugate partition of $n_j=n_{r_j^{(1)},j}+\cdots +n_{1,j}$; see \cite{G,B1,B2,B3} for more details. It is convenient to write  quasi-particle monomial \eqref{monomial} more briefly as 
\beq\label{briefly}
b=b_{\alpha_l}\,\cdots \,b_{\alpha_2}b_{\alpha_1}
,\quad\text{where}\quad
b_{\alpha_i}=x_{n_{r_{i}^{(1)},i}\alpha_{i}}(m_{r_{i}^{(1)},i})\ldots x_{n_{1,i}\alpha_{i}}(m_{1,i}) \text{ for } i=1,\ldots ,l.
\eeq

\subsection{Quasi-particle bases for \texorpdfstring{$\Lambda=k\Lambda_0$}{Lambda=kLambda0}}\label{subsec22}

Suppose that $\Lambda=k\Lambda_0$ for some positive integer $k$ so that $V$ denotes the generalized Verma module $N( k\Lambda_0 )$ or the standard module $L(k\Lambda_0 )$.
We   introduce certain difference conditions for  energies and charges of quasi-particles in \eqref{monomial}. First, for the adjacent quasi-particles of the same color we require that
\begin{align}
&\text{ for all}\quad i=1,\ldots ,l\fand p=1,\ldots, r_{i}^{(1)}-1\non\\
&\qquad  \text{if}\quad n_{p+1,i}=n_{p,i}\quad\text{then}\quad m_{p+1,i}\leqslant m_{p,i} -2 n_{p,i}.\label{d1}\tag{$c_1$}
\end{align}
Next, we turn to the difference conditions which describe the interaction of two quasi-particles of adjacent colors. For all $i=1,\ldots ,l$ define
\beq\label{niovi3}
\nu_i=\begin{cases}
1,&\text{if } \langle \alpha_i,\alpha_i \rangle =2,\\
2,&\text{if } \langle \alpha_i,\alpha_i \rangle =1,\\
3,&\text{if } \langle \alpha_i,\alpha_i \rangle =\frac{2}{3}
\end{cases}
\fand
i'=
\begin{cases}
l-2,&\text{if } i=l\text{ and }\g=D_l,\\
3,&\text{if } i=l\text{ and }\g=E_6,E_7,\\
5,&\text{if } i=l\text{ and }\g=E_8,\\
i-1,&\text{otherwise.} \\
\end{cases}
\eeq
Introduce the following difference conditions:
\begin{align}
&\text{for all}\quad i=1,\ldots ,l\fand p=1,\ldots, r_{i}^{(1)}\non\\
&\qquad  m_{p,i}\leqslant  -n_{p,i}
+ \sum_{q=1}^{r_{i'}^{(1)}}\min\left\{\textstyle\frac{\nu_{i} }{\nu_{i'}}n_{q,i'},n_{p,i}\right\}
- 2(p-1) n_{p,i} ,\label{d2}\tag{$c_2$}
\end{align}
where we set $r_0^{(1)}=0$ so that the  sum in \eqref{d2} is zero for $i=1$. In the end, we impose the following restrictions on the quasi-particle charges:
\beq\label{d3}\tag{$c_3$}
 n_{p,i}\leqslant k \nu_i \quad \text{for all}\quad i=1,\ldots ,l\fand p=1,\ldots, r_{i}^{(1)}.
\eeq

Let $\BN $ be the set of all monomials \eqref{monomial}, regarded as elements of $\ndo N(k\Lambda_0)$, satisfying conditions \eqref{d1} and \eqref{d2}. 
Moreover, let    $\BL $ be the set of all monomials \eqref{monomial}, regarded as elements of $\ndo L(k\Lambda_0)$, satisfying  \eqref{d1}, \eqref{d2} and \eqref{d3}. Finally, let
$$
\BVc=\left\{bv_V : b\in \BV\right\}\subset W_V  \quad\text{for}\quad V=N(k\Lambda_0),L(k\Lambda_0).
$$

\begin{thm}\label{thm_baza}
For any positive integer $k$ the set $\BVc$ forms a basis of the principal subspace $W_V$ of the $\gtilde$-module $V=N(k\Lambda_0),L(k\Lambda_0)$.
\end{thm}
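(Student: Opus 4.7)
\smallskip

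\noindent\textbf{Proof plan.} The plan is to split the argument into two parts, following Georgiev's strategy \cite{G} and its extensions in \cite{B1,B2,B3}: first, show that $\BVc$ spans $W_V$, then show linear independence. For $V=N(k\Lambda_0)$ one only needs \eqref{d1} and \eqref{d2}; condition \eqref{d3} enters through integrability in $L(k\Lambda_0)$. I would treat the Verma case first, then transfer to $L(k\Lambda_0)$ by passing through the quotient and imposing \eqref{d3}.

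\smallskip

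\noindent\textbf{Spanning.} Since $\nplus\otimes\CC[t]$ annihilates the highest weight vector $v_V$ and $[\ntplus,\ntplus]\subset\ntplus$, a standard PBW argument reduces $W_V$ to the span of monomials in $\ntplusleq$ applied to $v_V$. Using the fact that for fixed color $i$ the operators $x_{\alpha_i}(m)$ pairwise commute, every such monomial can be grouped into a product of the form \eqref{briefly}, with the colors increasing from right to left after one uses the Lie brackets in $\ntplus$ to push higher colors to the left, modulo products containing a genuinely larger color content (so that a double induction on color and total charge goes through). Within a fixed color block $b_{\alpha_i}$, the difference condition \eqref{d1} is imposed by the commutator identity used in \cite{G,B1} for same-color quasi-particles: whenever \eqref{d1} is violated, one rewrites the offending pair as a sum of monomials of strictly smaller charge-type. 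The difference condition \eqref{d2} between colors $i$ and $i'$ is obtained by expanding the vacuum relation
\beq\label{plan:vac}
x_{n\alpha_i}(z)\bigl(x_{\alpha_{i'}}(z)^{\nu_i/\nu_{i'}}\bigr)^{r_{i'}^{(1)}}=\text{regular},
\eeq
where the precise shape of the exponents reflects the normalization $\nu_i,\nu_{i'}$ from \eqref{niovi3}; this is the replacement of Georgiev's vacuum argument adapted to non-simply-laced and branched types via the prescription $i'$ in \eqref{niovi3}. Finally, in $L(k\Lambda_0)$ the relation $x_{\alpha_i}(-1)^{k\nu_i+1}\vmax=0$ coming from the integrability of the highest root of each long/short orbit produces \eqref{d3}.

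\smallskip

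\noindent\textbf{Linear independence.} This is the technically delicate half. I would adapt the projection technique of \cite{G,B1,B2,JP}: linearly order the quasi-particle monomials by charge-type (the tuple of partitions $(n_{r_i^{(1)},i},\ldots,n_{1,i})_{i=1}^l$) refined by total energy, and suppose a nontrivial relation $\sum c_b\, bv_V=0$ holds among elements of $\BVc$. For the Verma case, use the identification $N(k\Lambda_0)\cong U(\gtle)$ together with a PBW filtration compatible with the charge-type to extract the leading monomial in $U(\gtle)$ and conclude $c_b=0$. For the standard case, embed $L(k\Lambda_0)\hookrightarrow L(\Lambda_0)^{\otimes k}$ and apply repeated Dong--Lepowsky--Li $\Delta$-operators $e^{\alpha_i}z^{\alpha_i}$ coming from the lattice vertex subalgebras associated to each simple root. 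These operators lower the charges by one at the chosen color and multiply each $bv_V$ by a nonzero scalar depending on the charge-type, so that after a finite sequence the surviving contribution is a product of commutative level-one vertex operators, whose linear independence follows from the Frenkel--Kac lattice construction. The difference conditions \eqref{d1}--\eqref{d3} are exactly what guarantees that the leading monomial in each projection step is uniquely determined and the induction on charge-type closes.

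\smallskip

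\noindent\textbf{Main obstacle.} The hardest point is adapting the coefficient in \eqref{plan:vac} and the $\Delta$-operator computation to types $D$, $E$ and $F$, because the branching index $i'$ from \eqref{niovi3} is no longer $i-1$ at the trivalent node, and in type $F_4$ the short/long alternation forces the ratio $\nu_i/\nu_{i'}$ to enter nontrivially. Concretely, one must verify that at the fork (node $l$ in $D_l$, $E_6$, $E_7$, $E_8$) the vacuum relation between colors $l$ and $l-2$ has the same structural form as in the linear $A_l$ case, and that the projection operators for neighboring colors still separate charge-types. Once these localized computations, analogous to those carried out in \cite{B1,B2,B3} for $B_l$, $C_l$, $G_2$, are established case by case, both the spanning and the independence arguments proceed uniformly as sketched above.
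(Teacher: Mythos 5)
Your overall architecture (spanning via quasi-particle relations, linear independence via the embedding $L(k\Lambda_0)\subset L(\Lambda_0)^{\otimes k}$ and an induction on charge-type) is the same as the paper's, but the proposal has genuine gaps precisely at the points where types $D$, $E$ and $F$ differ from type $A$. Your independence step rests on shift operators $e^{\alpha_i}z^{\alpha_i}$ attached to \emph{simple roots} and on the Frenkel--Kac construction. Attaching the operators to simple roots does not do what you claim: commuting $e^{\alpha_i}z^{\alpha_i}$ past $x_{\alpha_j}(w)$ produces shifts governed by $\langle\alpha_i,\alpha_j\rangle$, i.e.\ by Cartan matrix entries, so the energies of neighbouring colors are disturbed. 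The paper's operators are attached to \emph{fundamental weights}: in types $D$, $E$ one uses the constant terms $A_{\lambda_i}$ of level-one intertwining operators of type $\binom{V_P}{V_P\,\,V_Q}$ together with the lattice operators $e_{\lambda_i}$, exactly because $\langle\lambda_i,\alpha_j\rangle=\delta_{ij}$ (see \eqref{sv1}), and the reduction terminates not at ``Frenkel--Kac'' but at an induction base, namely the known $A_1^{(1)}$ quasi-particle basis. More seriously, in type $F_4$ your scheme fails outright: the level-one module $L(\Lambda_0)$ for $F_4^{(1)}$ is not a lattice vertex algebra and $P=Q$, so there are no simple-current/intertwining operators of the above kind at all. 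The paper's Sections \ref{subsec53}--\ref{subsec54} replace them by the maximal-root operator $A_\theta=x_\theta(-1)$ combined with the Weyl translation operator $e_\theta$ (using \eqref{maxroot}, \eqref{eq:21}, \eqref{eq:22}) and reduce the problem to the known $C_3^{(1)}$ quasi-particle basis of \cite{B2}; your sketch contains no substitute for this step.

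Two further points. Your treatment of the Verma case (``PBW filtration compatible with the charge-type to extract the leading monomial'') is an assertion, not an argument: a quasi-particle of charge $n\geqslant 2$ expands into a large sum of PBW monomials, and conditions \eqref{d1}--\eqref{d2} do not visibly single out distinct leading terms under any natural order; note that the equality of the quasi-particle character of $W_{N(k\Lambda_0)}$ with the PBW character of $U(\ntplusleq)$ is itself a nontrivial combinatorial identity (Theorem \ref{thm_identitet}), so one should be suspicious of a cheap leading-term proof. The paper verifies independence for $N(k\Lambda_0)$ by the same projection technique, citing \cite{B1}. Finally, on the spanning side, your one-variable ``vacuum relation'' with exponent $\nu_i/\nu_{i'}$ (which can be fractional) is not the mechanism that yields \eqref{d2}; the correct input is the two-variable generalized commutation relation \eqref{r2}, i.e.\ multiplication by $(z_1-z_2)^{M_i}$ with $M_i=\min\{\frac{\nu_i}{\nu_{i-1}}n_{i-1},n_i\}$, used together with the same-color relations \eqref{r1}. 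That part of your plan is repairable, but as written it is not correct.
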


Even though Theorem \ref{thm_baza} is formulated for an arbitrary untwisted affine Lie algebra $\gtilde$, we only give  proof for $\g$ of type $D$, $E$ and $F$; see Sections \ref{sec50} and   \ref{sec60}. The proofs for the remaining types can be found in \cite{B1,B2,B3,G}.

\subsection{Quasi-particle bases for rectangular weights in types \texorpdfstring{$D$}{D} and \texorpdfstring{$E$}{E}}\label{subsec23}
Suppose that the affine Lie algebra $\gtilde$ is of type $D_l^{(1)}$, $E_6^{(1)}$ or $E_7^{(1)}$. Let $\Lambda$ be the {\em rectangular weight}, i.e. the weight of the form
\beq \label{lambdaDE}
\Lambda = k_0\Lambda_0+k_j\Lambda_j,
\eeq
where $k_0,k_j$ are positive integers and $\Lambda_j$ is the fundamental weight of level one; cf. \cite{G}. Recall that  $j=1,l-1,l$ for $\gtilde=D_l^{(1)}$, $j=1, 6$ for $\gtilde=E_6^{(1)}$ and   $j=1$  for $\gtilde=E_7^{(1)}$; see \cite{K}. Denote by $k=k_0+k_j$   the level of $\Lambda$.
 Define
\beq \label{jde67}
j_{t}=\begin{cases}
0,&\text{if }\ \  1 \leqslant t \leqslant k_0,\\
j,& \text{if}\ \  k_0 < t \leqslant k_0+k_j.
\end{cases}
\eeq

Introduce the following difference condition:
\begin{align}
&\text{for all}\quad i=1,\ldots ,l\fand p=1,\ldots, r_{i}^{(1)}\non\\
&\qquad  m_{p,i}\leqslant  -n_{p,i}
+ \sum_{q=1}^{r_{i'}^{(1)}}\min\left\{\textstyle n_{q,i'},n_{p,i}\right\}
- 2(p-1) n_{p,i} -\sum_{t=1}^{n_{p,i}}\delta_{i j_t}.\label{d2DE67}\tag{$c'_2$}
\end{align}
Note that this condition differs from \eqref{d2} by a new term $\sum_{t=1}^{n_{p,i}}\delta_{i j_t}$.
For a given rectangular weight $\Lambda$   denote by  $\BLL $  be the set of all monomials \eqref{monomial}, regarded as elements of $\ndo L(\Lambda)$, satisfying  \eqref{d1}, \eqref{d2DE67} and \eqref{d3}. Finally, let
$$
\BLcc=\left\{bv_{L(\Lambda)}:b\in \BLL\right\}\subset W_{L(\Lambda)}.
$$

\begin{thm}\label{thm_baza_DE}
Let $\gtilde$ be the affine Lie algebra of type $D_l^{(1)}$, $E_6^{(1)}$ or $E_7^{(1)}$.
For any rectangular weight $\Lambda$  the set $\BLcc$ forms a basis of the principal subspace $W_{L(\Lambda)}$.
\end{thm}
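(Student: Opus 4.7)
My plan is to combine the proof strategy of Theorem \ref{thm_baza} with Georgiev's treatment of rectangular weights in type $A$ from \cite{G}. The argument splits into a spanning part and a linear independence part, and both require adaptation to accommodate the nontrivial finite part $\lambda=k_j\lambda_j$ of the highest weight.

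For spanning I would follow the same reduction steps as in the proof of Theorem \ref{thm_baza}: same-color commutation relations yield \eqref{d1}, the adjacent-color vertex-operator relations (the analogues of the relations among $x_{\alpha_i}(z)$ and $x_{\alpha_{i'}}(z)$) give the cross-color difference condition, and the relation $x_\theta(z)^{k+1}=0$ on $L(\Lambda)$, inherited from the $L(k\Lambda_0)$-module structure, yields \eqref{d3}. The new ingredient is that $v_{L(\Lambda)}$ lies in the finite-dimensional $\g$-submodule $U_\lambda\subset L(\Lambda)$ with $\lambda=k_j\lambda_j$, so vectors of the form $x_{\alpha_j}(-1)^n v_{L(\Lambda)}$ satisfy stronger vanishing conditions for $n>k_0$ than they do on $v_{L(k\Lambda_0)}$. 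Propagating this sharpening through the usual reductions replaces \eqref{d2} by \eqref{d2DE67}: the extra term $\sum_{t=1}^{n_{p,i}}\delta_{ij_t}$, which is nonzero precisely when $i=j$ and $n_{p,i}>k_0$, counts the number of quasi-particle copies of color $j$ that ``spill over'' from the $k_0$ copies of $\Lambda_0$ into the $k_j$ copies of $\Lambda_j$ under the natural tensor-product decomposition.

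For linear independence I would implement Georgiev's projection method via the embedding $L(\Lambda)\hookrightarrow L(\Lambda_0)^{\otimes k_0}\otimes L(\Lambda_j)^{\otimes k_j}$ sending $v_{L(\Lambda)}$ to $v_{L(\Lambda_0)}^{\otimes k_0}\otimes v_{L(\Lambda_j)}^{\otimes k_j}$. This identifies $W_{L(\Lambda)}$ with a subspace of a tensor product of principal subspaces on which $\ntplus$ acts diagonally, so that quasi-particles decompose via the vertex-algebra coproduct. Composing the embedding with Li's $\Delta$-type simple-current operators attached to the minuscule coweight $\lambda_j^\vee$ produces a family of intertwiners which, in combination with Theorem \ref{thm_baza} applied to $L(k\Lambda_0)$ and its $k=1$ specialization applied to $L(\Lambda_j)$, allows one to isolate the coefficients of any alleged dependence relation in $\BLccbar$ by induction on the lex order of charge-types $(n_{p,i})$, exactly as in \cite{G,B1,B2,B3}.

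The main technical obstacle will be verifying that the spanning reduction produces precisely the correction $\sum_{t=1}^{n_{p,i}}\delta_{ij_t}$ in \eqref{d2DE67}. In types $D_l^{(1)}$, $E_6^{(1)}$, and $E_7^{(1)}$ the index $i'$ given by \eqref{niovi3} jumps to $l-2$, $3$, or $5$ for the terminal color $i=l$, so the interaction between the sharpened boundary relation at color $j$ and the cross-color commutators involving the branching colors must be controlled through a careful case analysis dictated by the non-type-$A$ Dynkin diagrams of Figure \ref{figure}. A secondary technical point, on the linear-independence side, is the verification that Li's simple-current operators remain compatible with the modified adjacency structure in \eqref{d2DE67}, which should again reduce to a case check governed by the Dynkin data and the list of admissible rectangular indices $j$.
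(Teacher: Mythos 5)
Your proposal follows essentially the same route as the paper: spanning is obtained by strengthening \eqref{d2} to \eqref{d2DE67} using the extra relation $x_{\alpha_j}(-1)^{k_0+1}v_{L(\Lambda)}=0$ (the generator of \eqref{idealDE}), and linear independence uses the embedding $L(\Lambda)\hookrightarrow L(\Lambda_0)^{\otimes k_0}\otimes L(\Lambda_j)^{\otimes k_j}$ with Georgiev-type projections and a simple-current shift, which the paper realizes concretely via the lattice operators $e_{\lambda_j}$ and relation \eqref{sv1} so as to convert the monomials satisfying \eqref{d2DE67} into monomials of $\BL$ and invoke Theorem \ref{thm_baza} directly. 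The only small imprecision is attributing \eqref{d3} to $x_{\theta}(z)^{k+1}=0$ rather than to the Lepowsky--Primc integrability $x_{\alpha_i}(z)^{k\nu_i+1}=0$ for the simple roots (equivalent here since all roots are long in types $D$ and $E$), which does not affect the argument.
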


 The proof of Theorem \ref{thm_baza_DE} is given in Section \ref{sec60}.

\section{Presentations of the principal subspaces \texorpdfstring{$\W$}{W-L}}\label{sec30}

In this section, we give the presentations of the principal subspaces $\W$ for an arbitrary untwisted affine Lie algebra $\gtilde$; see Theorem \ref{thm_prezentacija} below. Next, in Theorem \ref{thm_prezentacija_DE}, we give the presentations of $W_{L(\Lambda)}$ for all rectangular weights $\Lambda$ in types $D$ and $E$. 
As pointed out in Section \ref{sec00}, the presentations of the principal subspaces  of certain standard $\gtilde$-modules in types $A$, $D$ and $E$  were originally found and proved in \cite{FS,CLM1,CLM2,CLM3,S1,S2} while their general form was conjectured in \cite{S2}.

Let  $\Lambda$ be an integral dominant highest weight. Consider the natural surjective map
\begin{align}
f_{L(\Lambda)}\,\colon\, U(\ntplus)\,&\,\to\, W(\Lambda)\label{map}\\
a\,&\,\mapsto\, a\cdot v_{L(\Lambda)}.\non
\end{align}
For any  $i=1,\ldots ,l$ and integer $ m\geqslant k \nu_{i} +1$ define the elements $R_{\alpha_i} (-m)\in U(\ntplus)$ by
\begin{align*}
R_{\alpha_i} (-m)
=\sum_{\substack{m_1,\ldots ,m_{k\nu_{i}+1}\leqslant -1 \\
m_1+ \ldots + m_{k\nu_{i}+1}=-m }}
x_{\alpha_i}(m_1)\ldots x_{\alpha_i}(m_{k\nu_{i}+1}).
\end{align*}
Let $\I$ be the left ideal in the universal enveloping algebra $ U(\ntplus)$ defined by
\begin{align}\label{ideal}
\I\,=\,U(\ntplus)\,\ntplusgeq \, + \, \sum_{i=1}^l \sum_{m\geqslant k \nu_{i}+1}
 U(\ntplus)R_{\alpha_i} (-m). 
\end{align}
We have the following natural presentations of the principal subspaces:

\begin{thm}\label{thm_prezentacija}
For all positive integers $k$ we have
$$\ker\f =\I \qquad\text{or, equivalently,}\qquad \W\cong U(\ntplus)/\I.$$
\end{thm}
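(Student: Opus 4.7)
The plan is to prove the two inclusions $\I\subseteq\ker\f$ and $\ker\f\subseteq\I$ separately, using Theorem~\ref{thm_baza} in an essential way for the harder one. For $\I\subseteq\ker\f$ the argument is a direct computation. First, $\ntplusgeq=\nplus\otimes\CC[t]$ annihilates $\vmax$, so $U(\ntplus)\ntplusgeq\subseteq\ker\f$. Next, for each simple root $\alpha_i$, the elements $x_{\alpha_i}(-1)$, $x_{-\alpha_i}(1)$ span (together with $\alpha_i^{\vee}(0)-\nu_{i}c$) an $\mathfrak{sl}_2$-triple in which $\vmax$ is a lowest weight vector of weight $-k\nu_i$; integrability of $L(k\Lambda_0)$ therefore forces $x_{\alpha_i}(-1)^{k\nu_i+1}\vmax=0$. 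Since $x_{\alpha_i}(m)\vmax=0$ for $m\geqslant 0$, and since $[x_{\alpha_i}(z_1),x_{\alpha_i}(z_2)]=0$ so that $Y(x_{\alpha_i}(-1)^{k\nu_i+1}\mathbf 1,z)=x_{\alpha_i}(z)^{k\nu_i+1}$, this upgrades via \eqref{quasi} to $x_{\alpha_i}(z)_{-}^{k\nu_i+1}\vmax=0$ (where $_{-}$ denotes the strictly singular part). Extracting the coefficient of $z^{m-k\nu_i-1}$ then yields $R_{\alpha_i}(-m)\vmax=0$ for every $m\geqslant k\nu_i+1$.

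For the opposite inclusion, the plan is to show that the induced surjection $U(\ntplus)/\I\twoheadrightarrow\W$ is an isomorphism by matching it against the basis $\BLc$. Since $\ntplusgeq$ annihilates $v_{\NNN}$, the map $U(\ntplusleq)\to\WN$, $a\mapsto a v_{\NNN}$, is a linear isomorphism, and under this identification $U(\ntplus)/\I$ becomes $\WN/J$, where $J\subseteq\WN$ is the $U(\ntplus)$-submodule generated by $\{R_{\alpha_i}(-m)v_{\NNN}:1\leqslant i\leqslant l,\ m\geqslant k\nu_i+1\}$. The natural surjection $\WN/J\twoheadrightarrow\W$ is the one I want to invert, so it will be enough to show that $\WN/J$ is spanned by the image of $\BLc$; then, because $\BLc$ is a basis of $\W$ by Theorem~\ref{thm_baza}, a graded character count forces the surjection to be an isomorphism, giving $\ker\f=\I$.

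The spanning itself is proved by induction on a well-ordering of quasi-particle monomials. By Theorem~\ref{thm_baza}, $\WN$ is spanned by $\BNc$ and $\BLc\subseteq\BNc$, so only monomials $bv_{\NNN}\in\BNc\setminus\BLc$---those containing some quasi-particle $x_{n\alpha_i}(m)$ of charge $n>k\nu_i$---need to be reduced modulo $J$. I equip monomials with the colored lexicographic order on the charge sequences $(n_{r_i^{(1)},i},\ldots,n_{1,i})_{i=1}^{l}$, refined by total energy, as in \cite{G,B1,B2,B3}. The relation $R_{\alpha_i}(-m')v_{\NNN}\in J$ for an appropriate $m'$, combined with the commutation relations in $\ntplusleq$ and the already established conditions \eqref{d1}, \eqref{d2}, then rewrites the offending monomial as a linear combination of strictly smaller monomials; iterating this reduction brings every element of $\WN$ to a linear combination of elements of $\BLc$ modulo $J$. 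The main obstacle is the combinatorial bookkeeping inside this reduction step: after splitting the high-charge quasi-particle, grouping factors of color $i$ to form an $R_{\alpha_i}$ relation, and putting the result back into standard quasi-particle form, one must verify that every term produced is strictly smaller in the chosen order. This parallels Sadowski's argument in type $A_l$ \cite{S2}; the presence of short roots (captured by the factors $\nu_i$) and of the branching node in types $D,E$ means the order has to respect the color structure, but the logic of the reduction carries over.
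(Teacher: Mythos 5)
Your overall architecture is the same as the paper's: the easy inclusion $\I\subseteq\ker\f$ via the Lepowsky--Primc integrability $x_{(k\nu_i+1)\alpha_i}(z)=0$ (this is exactly Remark \ref{remarkLP}), and the hard inclusion by identifying $U(\ntplus)/\IN\cong\WN$, showing the further quotient by the $R_{\alpha_i}(-m)$-relations is spanned by the images of the monomials satisfying \eqref{d1}--\eqref{d3}, and then concluding from the linear independence of $\BLc$ (Theorem \ref{thm_baza}) that the surjection onto $\W$ is an isomorphism; this is precisely Sections \ref{subsec55}--\ref{subsec56}. The easy inclusion and the concluding isomorphism step are correct as you state them.

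The genuine gap is in the spanning step, which you yourself flag as ``the main obstacle'' and which is the actual content of the paper's proof. Two concrete problems with your sketch. First, your well-ordering (charge sequences refined by \emph{total} energy) cannot work: eliminating a quasi-particle of charge $n> k\nu_i$ using the $R_{\alpha_i}$-relations together with the color-commutation relations produces terms of the \emph{same} charge-type and the \emph{same} total energy --- only the distribution of energies among quasi-particles changes --- so the terms are not strictly smaller in your order and the induction is not well-founded. The paper instead uses the finer order \eqref{order1}, which compares energy-types, and the reduction goes \emph{upward}: one shows $P\bar{X}=0$ in $U(\ntplus)/\I$, extracts a coefficient to write the offending monomial as a combination of monomials of the same charge-type and total energy that are strictly \emph{greater}, and terminates because only finitely many such monomials are nonzero in the quotient; the conclusion is that monomials violating \eqref{d3} are actually zero there, not merely re-expressible. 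Second, your proposed mechanism (``splitting the high-charge quasi-particle, grouping factors of color $i$, \dots putting the result back into standard quasi-particle form'') runs into the fact that $\I$ is only a \emph{left} ideal while the generators $R_{\alpha_i}(-m)$ sit to the left of the lower-color quasi-particles in the monomial; commuting individual modes $x_{\alpha_i}(p)$ past modes of an adjacent color produces modes of non-simple positive roots, so the raw commutator calculus takes you outside the class of quasi-particle monomials. The paper circumvents this exactly by working with generating functions and the relations \eqref{r2}, whose $(z_1-z_2)^{M_i}$ factors (packaged into the Laurent polynomial $P$) let the high-charge factor $\bar{x}_{n\alpha_i}(z)$ be moved to the right \emph{without} correction terms, after which \eqref{r3} kills it. Without supplying this (or an equivalent) device and the correct ordering/termination argument, the reduction you describe does not go through, so the inclusion $\ker\f\subseteq\I$ remains unproved in your proposal.
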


In Section \ref{sec50}, we  employ the sets $\BLc$ from Theorem  \ref{thm_baza} to     prove Theorem \ref{thm_prezentacija} for the affine Lie algebra $\gtilde =F_4^{(1)}$. 
We omit the proof for   other types of  $\gtilde$ since it goes analogously, by using the corresponding quasi-particle bases.

Let $\gtilde$ be the affine Lie algebra of type $D_l^{(1)}$, $E_6^{(1)}$ or $E_7^{(1)}$.
As in \cite{S2}, for a given rectangular weight $\Lambda=k_0\Lambda_0 + k_j\Lambda_j$
define the left ideal
 in the universal enveloping algebra $ U(\ntplus)$  by
\begin{align}\label{idealDE}
I_{L(\Lambda)}\,=\,I_{L((k_0+k_j)\Lambda_0)}\,+\,U(\ntplus)x_{\alpha_j}(-1)^{k_0 +1}. 
\end{align}

\begin{thm}\label{thm_prezentacija_DE}
Let $\gtilde$ be the affine Lie algebra of type $D_l^{(1)}$, $E_6^{(1)}$ or $E_7^{(1)}$.
For a given rectangular weight $\Lambda$    we have
$$\ker f_{L(\Lambda)} =I_{L(\Lambda)} \qquad\text{or, equivalently,}\qquad W_{L(\Lambda)}\cong U(\ntplus)/I_{L(\Lambda)}.$$
\end{thm}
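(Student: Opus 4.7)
The plan is to prove the two inclusions $I_{L(\Lambda)}\subseteq \ker f_{L(\Lambda)}$ and $\ker f_{L(\Lambda)}\subseteq I_{L(\Lambda)}$ separately.

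For $I_{L(\Lambda)}\subseteq \ker f_{L(\Lambda)}$, the generators inherited from $I_{L((k_0+k_j)\Lambda_0)}$ are handled uniformly: since $L(\Lambda)$ has level $k=k_0+k_j$, it is a module over the simple vertex operator algebra $L(k\Lambda_0)$, so $x_{\alpha_i}(z)^{k\nu_i+1}$ vanishes as an operator on $L(\Lambda)$, and together with $x_{\alpha_i}(n)v_{L(\Lambda)}=0$ for $n\geqslant 0$ this forces $R_{\alpha_i}(-m)v_{L(\Lambda)}=0$ for all $m\geqslant k\nu_i+1$; the relation $\ntplusgeq\cdot v_{L(\Lambda)}=0$ is automatic. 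For the extra generator $x_{\alpha_j}(-1)^{k_0+1}$ we use the integrability of $L(\Lambda)$ with respect to the positive real root $\beta=\delta-\alpha_j$ of $\gtilde$. Since $\alpha_j$ is long in the simply-laced setting, $\langle\beta,\beta\rangle=2$, hence $\beta^\vee=\beta$; using $\Lambda_j=\Lambda_0+\lambda_j$ together with $\langle\lambda_j,\alpha_j\rangle=1$ and $\langle\Lambda_0,\delta\rangle=1$, one computes $\langle\Lambda,\beta\rangle=k_0$. As $v_{L(\Lambda)}$ is annihilated by $\ntplus$ and therefore a highest weight vector for the $\mathfrak{sl}_2$-triple attached to $\beta$, integrability yields $f_\beta^{k_0+1}v_{L(\Lambda)}=0$, and since $f_\beta$ is a nonzero scalar multiple of $x_{\alpha_j}(-1)$, the claim $x_{\alpha_j}(-1)^{k_0+1}v_{L(\Lambda)}=0$ follows.

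For the reverse inclusion, the first inclusion gives a natural surjection $\pi\colon U(\ntplus)/I_{L(\Lambda)}\twoheadrightarrow W_{L(\Lambda)}$, and by Theorem \ref{thm_baza_DE} the target has basis $\BLcc=\{bv_{L(\Lambda)}:b\in\BLL\}$. It therefore suffices to prove that the residues $\{b+I_{L(\Lambda)}:b\in\BLL\}$ span $U(\ntplus)/I_{L(\Lambda)}$; then $\pi$ maps a spanning set to a basis, hence is an isomorphism. The spanning is proved in two reduction stages. In the first stage, using only the generators of the smaller ideal $I_{L((k_0+k_j)\Lambda_0)}$, Theorem \ref{thm_prezentacija} at level $k=k_0+k_j$ reduces an arbitrary monomial in $U(\ntplus)$ to a linear combination of $\BL$-monomials, i.e.\ those satisfying $(c_1)$, $(c_2)$ and $(c_3)$. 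In the second stage, we use the extra relation $x_{\alpha_j}(-1)^{k_0+1}\equiv 0\pmod{I_{L(\Lambda)}}$, together with its consequences obtained by left multiplication by elements of $U(\ntplus)$ and by commutation with quasi-particles of smaller colors, to rewrite each $\BL$-monomial that violates $(c'_2)$ as a combination of $\BLL$-admissible monomials; the key observation is that $(c'_2)$ differs from $(c_2)$ only when $i=j$ and the charge $n_{p,j}$ exceeds $k_0$, and in that case the offending quasi-particle can be manipulated, via its expansion as a sum of products of $x_{\alpha_j}$-modes, to isolate and cancel an $x_{\alpha_j}(-1)^{k_0+1}$ factor.

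The main obstacle is the second reduction stage, where one has to set up a well-founded ordering on quasi-particle monomials (for instance lexicographic in charge-type and total energy) and certify that the rewriting procedure strictly decreases this ordering without producing terms that cascade outside the admissible range, so that the reduction terminates with a genuine linear combination of $\BLL$-monomials. This is closely parallel to the corresponding step in Sadowski's proof \cite{S2} for type $A_l$, and our task is to transport the bookkeeping to types $D$ and $E$ through the quasi-particle bases furnished by Theorem \ref{thm_baza_DE}.
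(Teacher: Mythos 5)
Your overall skeleton is the same as the paper's: the inclusion $I_{L(\Lambda)}\subseteq\ker f_{L(\Lambda)}$ (your integrability argument for $\beta=\delta-\alpha_j$, giving $x_{\alpha_j}(-1)^{k_0+1}v_{L(\Lambda)}=0$ with $\langle\Lambda,\beta^\vee\rangle=k_0$, is correct and in fact more explicit than the paper, which asserts this inclusion), followed by showing that $\{\bar b:b\in\BLL\}$ spans $U(\ntplus)/I_{L(\Lambda)}$ and comparing with the basis $\BLcc$ from Theorem \ref{thm_baza_DE}. Stage 1 of your reduction (using $I_{L(k\Lambda_0)}\subseteq I_{L(\Lambda)}$ and Theorem \ref{thm_prezentacija} to reduce to monomials satisfying \eqref{d1}, \eqref{d2}, \eqref{d3}) also matches the paper.

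The genuine gap is stage 2, which is exactly the step you concede as ``the main obstacle'' and do not carry out: the mechanism by which the extra generator $x_{\alpha_j}(-1)^{k_0+1}$ of \eqref{idealDE} upgrades \eqref{d2} to \eqref{d2DE67}. Your sketch --- expand the offending color-$j$ quasi-particle into $x_{\alpha_j}$-modes and ``isolate and cancel an $x_{\alpha_j}(-1)^{k_0+1}$ factor,'' commuting past quasi-particles of other colors --- does not work as stated, because $I_{L(\Lambda)}$ is a \emph{left} ideal (so the relation only kills monomials ending on the right in $x_{\alpha_j}(-1)^{k_0+1}$), and quasi-particles of adjacent colors do not commute: they only commute after multiplication by the factors $(z_1-z_2)^{M_i}$ as in \eqref{r2}. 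Hence the rewriting cannot be done monomial-by-monomial with a simple termination ordering; one must work with generating functions. The paper's resolution is precisely this: multiply the generating function $\bar X_{L(\Lambda)}$ by the Laurent polynomial $P=\prod(1-z_{q,i'}/z_{p,i})^{\min\{n_{q,i'},n_{p,i}\}}$, use Theorem \ref{thm_prezentacija} for $L(k\Lambda_0)$ to get the containment \eqref{equation123}, then move each $\bar x_{n\alpha_j}(z)$ to the right end where, modulo $U(\ntplus)\ntplusgeq$ and $U(\ntplus)x_{\alpha_j}(-1)^{k_0+1}$, at most $k_0$ of its modes can equal $-1$, which raises the admissible powers of $z_{p,j}$ by $n_{p,j}-k_0=\sum_{t=1}^{n_{p,j}}\delta_{jj_t}$ and yields \eqref{equation1234}; finally the coefficient-extraction argument of \cite[Thm.~5.1]{G} converts this power-series containment into the statement that any monomial violating \eqref{d2DE67} is a combination of elements of $\BLccbar$. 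Some argument of this kind (or a genuinely worked-out analogue of the reduction in \cite{S2}) is the missing idea; without it the spanning claim, and hence $\ker f_{L(\Lambda)}\subseteq I_{L(\Lambda)}$, is not established.
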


The proof of Theorem \ref{thm_prezentacija_DE} is given in Section \ref{sec60}.

\begin{rem}\label{remarkLP} 
The form of the elements $R_{\alpha_i} (-m)$
is motivated by 
the   integrability condition
\beq\label{integrability}
x_{(k\nu_i +1) \alpha_i} (z)=0 \quad\text{on any level $k$ standard module},
\eeq
which is
due to Lepowsky and Primc \cite{LP}.
It implies quasi-particle charges
  constraint \eqref{d3}.
\end{rem}

\section{Proof of Theorems \ref{thm_baza} and \ref{thm_prezentacija} in type \texorpdfstring{$F$}{F}}\label{sec50}

In this section, we  prove  Theorems \ref{thm_baza} and \ref{thm_prezentacija} in type $F$. The proof is divided   into six steps, i.e.  Sections \ref{subsec51}--\ref{subsec56}.
We consider the affine Lie algebra $\gtilde$ of type $F_4^{(1)}$ so that  $l=4$ and   the basis $\Pi$ of the root system $R$ for the corresponding simple Lie algebra $\g$ consists of the simple roots
$\alpha_1,\alpha_2,\alpha_3,\alpha_4$;
see \cite[Chap. III]{H}.
The maximal root $\theta$ equals 
\beq\label{maxroot}
\theta=2\alpha_1 +3\alpha_2+4\alpha_3+2\alpha_4\quad\text{and satisfies} \quad \alpha_i(\theta^{\vee})=\delta_{1i}\text{ for }i=1,2,3,4.
\eeq

\subsection{Linear order on quasi-particle monomials}\label{subsec51}
In this section, we briefly cover some basic concepts originated in \cite{G} which are typically used to handle quasi-particle monomials. In particular, we introduce a certain linear order among such monomials which will come in useful   in Section \ref{subsec55}.
Let
\begin{align}
b=&
\Big(x_{n_{r_{4}^{(1)},4}\alpha_{4}}(m_{r_{4}^{(1)},4})\ldots x_{n_{1,4}\alpha_{4}}(m_{1,4})\Big)
\Big(x_{n_{r_{3}^{(1)},3}\alpha_{3}}(m_{r_{3}^{(1)},3})\ldots x_{n_{1,3}\alpha_{3}}(m_{1,3})\Big)\non
\\
&
\Big(x_{n_{r_{2}^{(1)},2}\alpha_{2}}(m_{r_{2}^{(1)},2})\ldots x_{n_{1,2}\alpha_{2}}(m_{1,2})\Big)
\Big( x_{n_{r_{1}^{(1)},1}\alpha_{1}}(m_{r_{1}^{(1)},1})\ldots  x_{n_{1,1}\alpha_{1}}\label{monomial4}\tag{$m_{F_4}$}(m_{1,1})\Big)
\end{align}
be an element of $\ndo V$, where $V=\NNN$ or $V=\LLL$, such that
\beq\label{extra}
n_{r_{i}^{(1)},i}\leqslant \ldots \leqslant n_{1,i}\fand 
m_{r_{i}^{(1)},i}\leqslant \ldots \leqslant m_{1,i}\qquad\text{for all }i=1,2,3,4.
\eeq
Define  the {\em charge-type} $\Cc$ and the {\em energy-type} $\Ec$ of $b$  by
\begin{align}
&\Cc=\left( n_{r_{4}^{(1)},4},\ldots , n_{1,4}; \,
n_{r_{3}^{(1)},3},\ldots , n_{1,3};\,
n_{r_{2}^{(1)},2},\ldots , n_{1,2};\,
 n_{r_{1}^{(1)},1},\ldots  , n_{1,1}\right),\label{charge-type}\\
&\Ec=\left( m_{r_{4}^{(1)},4},\ldots , m_{1,4}; \,
m_{r_{3}^{(1)},3},\ldots , m_{1,3};\,
m_{r_{2}^{(1)},2},\ldots , m_{1,2};\,
 m_{r_{1}^{(1)},1},\ldots  , m_{1,1}\right).\non
\end{align}
Moreover, define the {\em color-type} of $b$ as the quadruple $(n_{4},n_{3},n_{2},n_{1})$  such that $n_j$ denotes the sum of charges of all color $j$ quasi-particles, i.e. such that $n_j= n_{r_{j}^{(1)},j}+\ldots + n_{1,j}$.

Let $b_1,b_2$ be any two quasi-particle monomials of the same color-type, expressed as in \eqref{monomial4},  such that
their charges and energies satisfy \eqref{extra}. Denote their   charge-types  and    energy-types by $\Cc_1,\Cc_2$  and  $\Ec_1,\Ec_2$ respectively. 
Define the strict linear order among quasi-particle monomials of the same color-type by 
\beq\label{order1}
b_1< b_2\qquad \text{if}\qquad \Cc_1<\Cc_2\quad\text{or}\quad \Cc_1=\Cc_2 \text{ and } \Ec_1<\Ec_2,
\eeq
 where the order on  (finite) sequences of integers is defined as follows:
\begin{align}
&(x_p,\ldots ,x_1)< (y_r,\ldots ,y_1)\qquad   \text{if there exists }s\text{ such that }\quad\label{order2}\\
&x_1=y_1,\,\ldots,\,x_{s-1}=y_{s-1}\Fand s=p+1\leqslant r\quad \text{or}\quad x_s<y_s.\non
\end{align}

\subsection{Projection of the principal subspace} \label{subsec52} 
As in \cite{B1}, we now generalize Georgiev's projection \cite{G} to type $F$.
Consider  quasi-particle monomial \eqref{monomial4} as an element of  $\ndo\LLL$. Suppose that its charges and energies satisfy \eqref{extra}. Define its {\em dual charge-type} $\Dc$ as 
\begin{align}
&\Dc=\left(r^{(1)}_{4}, \ldots , r^{(2k)}_{4};\,
 r^{(1)}_{3}, \ldots , r^{(2k)}_{3};\,
 r^{(1)}_{2}, \ldots , r^{(k)}_{2}; \,
r^{(1)}_{1}, \ldots , r^{(k)}_{1}\right),\label{dual-charge-type}
\end{align}
where $r_{i}^{(n)}$ denotes the number of color $i$ quasi-particles of  charge greater than or equal to $n$  in  the monomial. Observe that, due to \eqref{integrability}, the monomial does  not posses any quasi-particles of color $i$ whose charge is strictly  greater than $k\nu_i$.

The standard module $L(k\Lambda_0)$ can be regarded as a submodule of the tensor product module
$L(\Lambda_0)^{\ot k}$ generated by the highest weight vector $
v_{L(k\Lambda_0)}=v_{L(\Lambda_{0})}^{\otimes k }$.
Let $\pi_{\Dc}$ be the projection of the principal subspace $\W$ on  the tensor product space
$$ 
W_{(\mu^{(k)}_{4};\mu^{(k)}_{3};r_{2}^{(k)};r_{1}^{(k)})}
\otimes \cdots \otimes  
W_{(\mu^{(1)}_{4};\mu^{(1)}_{3};r_{2}^{(1)};r_{1}^{(1)})}
\subset 
W_{L(\Lambda_0)}^{\otimes k}\subset L(\Lambda_0)^{\otimes k},
$$
where  
$W_{(\mu^{(t)}_{4};\mu^{(t)}_{3};r_{2}^{(t)};r_{1}^{(t)})}$
denote the $\mathfrak{h}$-weight subspaces of  the level $1$ principal subspace $W_{L(\Lambda_0)}$ of   weight $\mu^{(t)}_{4}\alpha_4+ \mu^{(t)}_{3}\alpha_3+r^{(t)}_{2}\alpha_2+ r_{1}^{(t)}\alpha_1 \in R$ with
\beq\label{miovi}
\mu^{(t)}_{i}=r^{(2t)}_{i}+ r^{(2t-1)}_{i}\qquad\text{for}\qquad t=1,\ldots ,k \fand i=3,4.
\eeq

We denote by the same symbol  $\pi_{\Dc}$ the generalization of the projection  to the space of formal series with coefficients in $W_{\LLL}$.
Applying the generating function corresponding to \eqref{monomial4} on the highest weight vector $v_{L(k\Lambda_0)}=v_{L(\Lambda_{0})}^{\otimes k }  $ we obtain
 \begin{align}
&\big(x_{n_{r_{4}^{(1)},4}\alpha_{4}}(z_{r_{4}^{(1)},4}) \cdots     x_{n_{1,4}\alpha_{4}}(z_{1,4})\big)
\big(x_{n_{r_{3}^{(1)},3}\alpha_{3}}(z_{r_{3}^{(1)},3}) \cdots     x_{n_{1,3}\alpha_{3}}(z_{1,3})\big)\non\\
&\times\big(x_{n_{r_{2}^{(1)},2}\alpha_{2}}(z_{r_{2}^{(1)},2})\cdots
 \cdots x_{n_{1,2}\alpha_{2}}(z_{1,2})\big)
\big(x_{n_{r_{1}^{(1)},1}\alpha_{1}}(z_{r_{1}^{(1)},1})\cdots  x_{n_{1,1}\alpha_{1}}(z_{1,1})\big)v_{L(k\Lambda_0)}.\label{eq:p1}
\end{align}
Relations \eqref{integrability} imply that by applying the projection $\pi_{\Dc}$ on \eqref{eq:p1} we get 
\begin{align} 
& \hspace{-6pt} \Big(x_{n_{r^{(2k-1)}_{4},4}^{(k)}\alpha_{4}}(z_{r_{4}^{(2k-1)},4})\cdots   x_{n_{1,4}^{(k)}\alpha_{4}}(z_{1,4})\Big)
\Big( x_{n_{r^{(2k-1)}_{3},3}^{(k)}\alpha_{3}}(z_{r_{3}^{(2k-1)},3})\cdots   x_{n_{1,3}^{(k)}\alpha_{3}}(z_{1,3})\Big)\non\\
&\hspace{-6pt}\times\Big(    x_{n_{r^{(k)}_{2},2}^{(k)}\alpha_{2}}(z_{r_{2}^{(1)},2})\cdots     x_{n{_{1,2}^{(k)}\alpha_{2}}}(z_{1,2})\Big) 
\Big( x_{n_{r^{(k)}_{1},1}^{(k)}\alpha_{1}}(z_{r_{1}^{(k)},1})\cdots     x_{n{_{1,1}^{(k)}\alpha_{1}}}(z_{1,1}) \Big)\ v_{L(\Lambda_{0})}\non\\
\ot\cdots \ot&\Big( x_{n_{r^{(1)}_{4},4}^{(1)}\alpha_{4}}(z_{r_{4}^{(1)},4})\cdots   x_{n_{1,4}^{(1)}\alpha_{4}}(z_{1,4})\Big)
\Big(x_{n_{r^{(1)}_{3},3}^{(1)}\alpha_{3}}(z_{r_{3}^{(1)},3})\cdots
\cdots   x_{n_{1,3}^{(1)}\alpha_{3}}(z_{1,3})\Big)\non\\
&\hspace{-6pt}\times \Big( x_{n_{r^{(1)}_{2},2}^{(1)}\alpha_{2}}(z_{r_{2}^{(1)},2})\cdots     x_{n{_{1,2}^{(1)}\alpha_{2}}}(z_{1,2})\Big)
\Big( x_{n_{r^{(1)}_{1},1}^{(1)}\alpha_{1}}(z_{r_{1}^{(1)},1})\cdots     x_{n{_{1,1}^{(1)}\alpha_{1}}}(z_{1,1}) \Big) v_{L(\Lambda_{0})}\label{find}
\end{align}
  multiplied by some  nonzero scalar, where we set  $x_{0\alpha_i}(z)= 1$. The integers $n_{p,i}^{(t)}$ in \eqref{find} are uniquely determined by 
$$
0 \leqslant  
 n^{(k)}_{p,i}\leqslant    \ldots \leqslant  n^{(2)}_{p,i} \leqslant  n^{(1)}_{p,i} \leqslant \nu_i
\fand 
n_{p,i}=\sum_{t=1}^k n^{(t)}_{p,i}\qquad\text{for all }  i=1,2,3,4
$$
and by the requirement that at most one $n_{p,i}^{(t)}$ equals $1$ when $i=3,4$.    Therefore, for every variable $z_{r,i}$, where $i=1,2,3,4$ and $r=1,\ldots , r_i^{(1)}$,
the projection $\pi_\Dc$ places at most one generating function $x_{\alpha_i}(z_{r,i})$ if   $i=1,2$   and at most two generating functions $x_{\alpha_i}(z_{r,i})$ if $i=3,4$   on each tensor factor of $W(\Lambda_0)^{\ot k}$.

\subsection{Operators \texorpdfstring{$A_\theta$}{A-theta} and \texorpdfstring{$e_\alpha$}{e-alpha}}\label{subsec53} 
Let $b\in \BL$ be a quasi-particle monomial of charge-type $\Cc$ and dual charge-type  $\Dc$. Denote the charges and the energies of its quasi-particles as in \eqref{monomial4}.
In this section, generalizing the approach  from \cite{B3}, we demonstrate how to reduce   $b$   to obtain a new  monomial  $b'\in \BL$ such that its  charge-type $\Cc'$ satisfies $\Cc'<\Cc$ with respect to linear order \eqref{order1}.   This will be a key step in the proof of linear independence of the set $\BLc$ in Section \ref{subsec54}.

Let $\At$ be the constant term of the operator 
$$x_{\theta}(z)=\sum_{r\in\ZZ} x_{\theta}(r) z^{-r-1}\in\ndo L(\Lambda_0)[[z^{\pm 1}]],$$ 
i.e. $\At = x_{\theta} (-1)$, where $\theta$ is the maximal root; recall \eqref{maxroot}.  
Consider the image of the vector $\pi_\Dc\, b\vmax\in\W\subset W_{L(\Lambda_0)}^{\ot k}$
 with respect to the
 operator
$$(\At)_s \coloneqq\underbrace{1\ot\cdots\ot 1}_{k-s}\ot  \At\ot \underbrace{1\ot\cdots \ot 1}_{s-1}\qquad\text{for}\qquad s=n_{1,1}. $$
This image can be obtained as the   coefficient of the variables
\beq\label{vars}
\wvr{z}\coloneqq z_{r_{4}^{(1)},4}^{-m_{r_{4}^{(1)},4}-n_{r_{4}^{(1)},4}}\cdots 
z_{2,1}^{-m_{2,1}-n_{2,1}}
z_{1,1}^{-m_{1,1}-n_{1,1}} 
\eeq
 in the expression
\beq\label{genfun}
(\At)_s \, \pi_\Dc\, x_{n_{r_{4}^{(1)},4}\alpha_{4}}(z_{r_{4}^{(1)},4}) \cdots 
x_{n_{2,1}\alpha_{1}}(z_{2,1})
x_{n_{1,1}\alpha_{1}}(z_{1,1}) v_{L(k\Lambda_0)}.
\eeq
Due to \cite{FHL}, the operator $\At$ commutes with the action of quasi-particles.  Hence, using \eqref{find}, we find that the $s$-th tensor factor (from the right) in \eqref{genfun} equals
 \begin{align*}
F_s=&\Big(x_{n_{r^{(2s-1)}_{4},4}^{(s)}\alpha_{4}}(z_{r_{4}^{(2s-1)},4})\cdots   x_{n_{1,4}^{(s)}\alpha_{4}}(z_{1,4})\Big)
\Big( x_{n_{r^{(2s-1)}_{3},3}^{(s)}\alpha_{3}}(z_{r_{3}^{(2s-1)},3})\cdots   x_{n_{1,3}^{(s)}\alpha_{3}}(z_{1,3})\Big)\\
&\times\Big(    x_{n_{r^{(s)}_{2},2}^{(s)}\alpha_{2}}(z_{r_{2}^{(s)},2})\cdots     x_{n{_{1,2}^{(s)}\alpha_{2}}}(z_{1,2})\Big) 
\Big( x_{n_{r^{(s)}_{1},1}^{(s)}\alpha_{1}}(z_{r_{1}^{(s)},1})\cdots     x_{n{_{1,1}^{(s)}\alpha_{1}}}(z_{1,1}) \Big) x_{\theta} (-1)v_{L(\Lambda_{0})}.\non
\end{align*}

Consider the Weyl group translation operator  $e_\alpha\in\ndo L(\Lambda_0)$ defined by
$$
 e_{\alpha}=\exp  x_{-\alpha}(1)\exp  (- x_{\alpha}(-1))\exp  x_{-\alpha}(1) \exp x_{\alpha}(0)\exp   (-x_{-\alpha}(0))\exp x_{\alpha}(0) $$
for $\alpha  \in R$; see \cite[Chap. 3]{K}. It possesses the following properties:
\begin{align}
& e_{\alpha}v_{L(\Lambda_0)}=-x_{\alpha}(-1)v_{L(\Lambda_0)}\quad\text{for every long root }\alpha,\label{eq:21}\\
&x_{\beta}(j)e_{\alpha}=e_{\alpha}x_{\beta}(j+\beta(\alpha\sp\vee)) \quad \text{for all }   \alpha,\beta \in R\text{ and } j \in \mathbb{Z}.\label{eq:22}
\end{align}
Using  \eqref{eq:21} and \eqref{eq:22} for $\alpha=\theta$ we rewrite the $s$-th tensor factor $F_s$ as  
\beq\label{efes}
F_s = - e_{\theta}\, F_s\, z_{r_{1}^{(s)},1}\cdots z_{2,1} z_{1,1}.
\eeq
Recall \eqref{maxroot} and notation \eqref{briefly}. 
Taking the  coefficient of variables \eqref{vars} in \eqref{efes}  we find
$$
(\At)_s \,\pi_{\Dc}\,bv_{L(k\Lambda_0)}
=-(e_{\theta})_s  \,\pi_{\Dc}\,b^{+}v_{L(k\Lambda_0)},
$$
where $(e_{\theta})_s $ denotes the action of $e_{\theta}$ on the $s$-th tensor factor (from the right) and
\begin{align*}
b^{+}
=b_{\alpha_{4}}\, b_{\alpha_{3}}\, b_{\alpha_{2}}\,b_{\alpha_{1}}^{<s}\,b_{\alpha_{1}}^{s},\quad\text{where}\quad
&b_{\alpha_{1}}^{<s}=x_{n_{r_{1}^{(1)},1}\alpha_{1}}(m_{r_{1}^{(1)},1})\cdots x_{n_{r_{1}^{(s)}+1,1}}(m_{r_{1}^{(s)}+1,1})\\  
\fand &b_{\alpha_{1}}^{s}= x_{n_{r_{1}^{(s)},1}}(m_{r_{1}^{(s)},1}+1)\cdots x_{n_{1,1}\alpha_{1}}(m_{1,1}+1).&
\end{align*}

Therefore, by applying the above procedure we increased the energies of all quasi-particles of color $1$ and charge $s=n_{1,1}$ in the monomial $b\in\BL$ by  $1$. We may   continue to apply the same procedure, now starting with $b^+ \vmax$, until we obtain the monomial
\begin{align*}
\wtld{b}
=b_{\alpha_{4}}\, b_{\alpha_{3}}\, b_{\alpha_{2}}\,\wtld{b}_{\alpha_{1}},\qquad\text{where}\qquad
\wtld{b}_{\alpha_{1}}= x_{n_{r_{1}^{(1)},1}}(\wtld{m}_{r_{1}^{(1)},1})\cdots x_{n_{1,1}\alpha_{1}}(\wtld{m}_{1,1})\qquad\text{and}\\
(\wtld{m}_{r_{1}^{(1)},1},\ldots ,\wtld{m}_{r_{1}^{(s)}+1,1}, \wtld{m}_{r_{1}^{(s)},1},\ldots,\wtld{m}_{1,1})
\hspace{-2pt}=\hspace{-2pt}(m_{r_{1}^{(1)},1},\ldots ,m_{r_{1}^{(s)}+1,1}, m_{r_{1}^{(s)},1}-m_{1,1}-s,\ldots,-s).
\end{align*}
Since $b$ is an element of $\BL$, the quasi-particle monomial   $\wtld{b}$ belongs to $\BL$ as well. Moreover, the charge-type and the dual charge-type of  $\wtld{b}$ equal $\Cc$ and $\Dc$ respectively.

By \eqref{eq:21} we have  $x_{\alpha_1}(-1)v_{L(\Lambda_0)}=-e_{\alpha_1}v_{L(\Lambda_0)}$.
Hence,  the vector $\pi_{\Dc}\,\wtld{b} v_{L(k\Lambda_0)}$, which belongs to $ \W\subset W_{L(\Lambda_0)}^{\ot k}$,  equals the coefficient of the variables
\beq\label{vars2}
\wvr{z}\,\Big(z_{r_{1}^{(s)},1}\cdots  z_{2,1} z_{1,1}\Big)^{m_{1,1}+s}
\eeq
 in 
\beq\label{genfun2}
 (-1)^s \,\pi_{\Dc}\, x_{n_{r_{4}^{(1)},4}\alpha_{4}}(z_{r_{4}^{(1)},4}) \cdots x_{n_{2,1}\alpha_{1}}(z_{2,1}) 
\, (1^{\ot (k-s)} \ot e_{\alpha_1}^{\ot s})
\,v_{L(\Lambda_0)}^{\ot k},
\eeq
where  $\wvr{z}$ is given by \eqref{vars}. 
We now employ \eqref{eq:22} to move  $1^{\ot (k-s)} \ot e_{\alpha_1}^{\ot s}$ 
all the way to the left in \eqref{genfun2}. 
Next, by dropping the invertible operator $(-1)^s(1^{\ot (k-s)} \ot e_{\alpha_1}^{\ot s})$ and taking the coefficient of variables \eqref{vars2} we get
$\pi_{\Dc'}\,b'\vmax$, where the quasi-particle monomial $b'$ of  charge-type $\Cc'$ and dual charge-type $\Dc'$ is given by
\begin{align*}
b'=b_{\alpha_{4}}\, b_{\alpha_{3}}\, b'_{\alpha_{2}}\,b'_{\alpha_{1}}
\qquad\text{for}\qquad
b'_{\alpha_{1}}=x_{n_{r^{(1)}_{1},1}\alpha_{1}}(\wtld{m}_{r^{(1)}_{1},1}+2n_{r^{(1)}_{1},1})\cdots   x_{n_{2,1}\alpha_{1}}(\wtld{m}_{2,1}+2n_{2,1}),&\\
b'_{\alpha_{2}}=x_{n_{r^{(1)}_{2},2}\alpha_{2}}(m_{r^{(1)}_{2},2}-n^{(1)}_{r_{2}^{(1)},2}-\cdots-n^{(s)}_{r_{2}^{(1)},2})\cdots x_{n_{1,2}\alpha_{2}}(m_{1,2}-n^{(1)}_{1,2}-\cdots-n^{(s)}_{1,2}).&
\end{align*}
Clearly,   the energies of the quasi-particles in colors $3$ and $4$ did not change.  Furthermore, if the dual charge-type $\Dc$ of $b$ equals
$$\Dc=\big(r^{(1)}_{4}, \ldots , r^{(2k)}_{4};\, r^{(1)}_{3}, \ldots , r^{(2k)}_{3}; \, r^{(1)}_{2}, \ldots , r^{(k)}_{2};\, r^{(1)}_{1},\ldots, r_1^{(n_{1,1})},\underbrace{0, \ldots, 0}_{k-s}\big),
$$
then the dual charge-type $\Dc'$  of $b'$  equals
$$\Dc'=\big(r^{(1)}_{4}, \ldots , r^{(2k)}_{4};\, r^{(1)}_{3}, \ldots , r^{(2k)}_{3};\, r^{(1)}_{2}, \ldots , r^{(k)}_{2};\, r^{(1)}_{1}-1,\ldots, r_1^{(n_{1,1})}-1,\underbrace{0, \ldots, 0}_{k-s}\big).
$$
In particular, we have $\Cc'<\Cc$ with respect to linear order \eqref{order1}.
Finally, by arguing  as in \cite[Proposition 3.3.1]{B2} one can check that $b'$ belongs to $\BL$.

\subsection{Linear independence of the sets \texorpdfstring{$\BVc$}{B-V}}\label{subsec54} 
In this section, we prove  linear independence of the set  $\BLc$. Linear independence of   $\BNc$ can be verified by arguing as in \cite[Sect. 3]{B1}.
Suppose there exists a linear dependence relation    among some elements   $b^a \vmax\in\BLc$, 
\begin{equation}\label{eq:d1}
\sum_{a \in A}
c_{a}\, b^a v_{L(k\Lambda_0)}=0, \quad\text{where}\quad c_a\in\CC,\, c_a\neq 0\text{ for all }a\in A
\end{equation}
and $A$ is a finite nonempty set. As the principal subspace $\W$  is a direct sum of its $\h$-weight subspaces, we can 
assume that all $b^a\in\BL$ posses the same color-type. 

Recall  strict linear order \eqref{order1} and  choose $a_0\in A$ such that $b^{a_0}< b^a$ for all $a\in A$, $a\neq a_0$. Suppose that the charge-type $\Cc$ and the dual charge-type $\Dc$ of $b^{a_0}$ are given by \eqref{charge-type} and \eqref{dual-charge-type} respectively. Applying the projection $\pi_{\Dc}$ on   \eqref{eq:d1} we obtain a linear combination of elements in
\begin{align*} 
&W_{(\mu^{(k)}_{4};\mu^{(k)}_{3};r_{2}^{(k)};0)}
\ot\cdots \ot 
 W_{(\mu^{(n_{1,1}+1)}_{4};\mu^{(n_{1,1}+1)}_{3};r_{2}^{(n_{1,1}+1)};0)} \\
&\ot W_{(\mu^{(n_{1,1})}_{4};\mu^{(n_{1,1})}_{3};r_{2}^{(n_{1,1})};r_{1}^{(n_{1,1})})}
\ot\cdots \ot 
W_{(\mu^{(1)}_{4};\mu^{(1)}_{3};r_{2}^{(1)};r_{1}^{(1)})},
\end{align*}  
recall \eqref{miovi}.
The definition of the projection $\pi_{\Dc}$ implies that all   $b^a v_{L(k\Lambda_0)}$ such that the charge-type of $b^a$ is strictly greater than $\mathcal{C}$  with respect to \eqref{order2} are
annihilated by $\pi_{\Dc}$. 
Therefore, we can assume   that all $b^a$ posses the same  charge-type $\Cc$ and, consequently, the same dual-charge-type $\Dc$.

As in \eqref{briefly}, write the monomials $b^a$ as $b^a=b^a_{\alpha_4}b^a_{\alpha_3}b^a_{\alpha_2}b^a_{\alpha_1}$, where $b^a_{\alpha_j}$ consist of quasi-particles of color $j$. We now apply the procedure described in Section \ref{subsec53} on
 the linear combination
\begin{equation}\label{eq:d2}
c_{a_0}\,\pi_{\Dc}\, b^{a_0}  v_{L(k\Lambda_0)}+
\sum_{a \in A,\,a\neq a_0}
c_{a}\,\pi_{\Dc}\, b^a v_{L(k\Lambda_0)}=0. 
\end{equation}
We repeat it until all  quasi-particles of color $1$ are removed from the  first summand $c_{a_0}\pi_{\Dc}\, b^{a_0}  v_{L(k\Lambda_0)}$.
This also removes all quasi-particles of color $1$ from other summands, so that \eqref{eq:d2}  becomes
\beq\label{thesum}
\wtld{c}_{a_0}\,\pi_{\wtld{\Dc}}\, b^{a_0}_{\alpha_4}b^{a_0}_{\alpha_3}\wtld{b}^{a_0}_{\alpha_2} v_{L(k\Lambda_0)} + \sum_{\substack{ a \in A,\,a\neq a_0\\b_{\alpha_1}^{a_0}=b_{\alpha_1}^{a} }}
\wtld{c}_{a}\,\pi_{\wtld{\Dc}}\, b^{a}_{\alpha_4}b^{a}_{\alpha_3}\wtld{b}^{a}_{\alpha_2} v_{L(k\Lambda_0)}=0 
\eeq
for some  quasi-particle  monomials $\wtld{b}^{a}_{\alpha_2}$ of color $2$ and  scalars $\wtld{c}_{a}\neq 0$ such that $\wtld{\Dc}$ is the dual charge-type of all quasi-particle monomials $b^{a}_{\alpha_4}b^{a}_{\alpha_3}\wtld{b}^{a}_{\alpha_2}$ in \eqref{thesum}. 
	The summation in \eqref{thesum}   goes over all $a\neq a_0$ such that $b^{a}_{\alpha_1}= b^{a_0}_{\alpha_1}$ because the summands $\pi_{\Dc}\, b^a v_{L(k\Lambda_0)}$ such that
  $b^{a_0}_{\alpha_1}< b^a_{\alpha_1}$ get annihilated in the process.

	The vectors $b^{a}_{\alpha_4}b^{a}_{\alpha_3}\wtld{b}^{a}_{\alpha_2} v_{L(k\Lambda_0)}$ in \eqref{thesum} belong to $\BLc$.
Furthermore, they can be realized as elements of the principal subspace of the level $k$ standard module $L(k\Lambda_0)$ with the highest weight vector $\vmax$ for the affine Lie algebra of type $C_3^{(1)}$. Moreover, their realizations belong to the corresponding basis  in type $C_3^{(1)}$, as given by Theorem \ref{thm_baza} (for a detailed proof in type $C_l^{(1)}$ see \cite{B2}). This implies $\wtld{c}_{a_0}=0$ and, consequently, $c_{a_0}=0$, thus contradicting \eqref{eq:d1}. Finally, we conclude that the set $\BLc$ is linearly independent.

\subsection{Small spanning sets \texorpdfstring{$\BVcbar$}{B-V}}\label{subsec55} 
In this section, we construct certain small spanning sets $\BNcbar$ and $\BLcbar$ for the quotients
$U(\ntplus)/ \IN  $ and $U(\ntplus)/ \I$ of the algebra $U(\ntplus)$ over its left ideals $\IN  =U(\ntplus) \ntplusgeq$ and $\I$ defined by  \eqref{ideal}. We denote by $\bar{x}$ the image of the element $x\in U(\ntplus)$  in these quotients with respect to the corresponding canonical epimorphisms.
First, we consider
$U(\ntplus)/ \IN$. 
By   Poincar\'{e}--Birkhoff--Witt theorem for the universal enveloping algebra we have
$$
U(\ntilde)=U(\ntilde_{\alpha_4})U(\ntilde_{\alpha_3})U(\ntilde_{\alpha_2})U(\ntilde_{\alpha_1}),\quad\text{where}\quad \ntilde_{\alpha_i}=\mathfrak{n}_{\alpha_i}\ot\CC[t,t^{-1}]\text{ and } \mathfrak{n}_{\alpha_i}=\CC x_{\alpha_i}.
$$
 By \eqref{defrel} quasi-particles of the same color commute, so all monomials
\beq\label{monomial4bar}\tag{$\bar{m}_{F_4}$}
\bar{b}=
\Big(\bar{x}_{n_{r_{4}^{(1)},4}\alpha_{4}}(m_{r_{4}^{(1)},4})\ldots \bar{x}_{n_{1,4}\alpha_{4}}(m_{1,4})\Big)\ldots 
\Big( \bar{x}_{n_{r_{1}^{(1)},1}\alpha_{1}}(m_{r_{1}^{(1)},1})\ldots  \bar{x}_{n_{1,1}\alpha_{1}}(m_{1,1})\Big)
\eeq
such that
their charges and energies satisfy 
\eqref{extra}
form a spanning set for   $U(\ntplus)/ \IN$. 

We now list two families of quasi-particle relations which can be used  to strengthen the    conditions in \eqref{extra}, i.e. to obtain a smaller spanning set. 
The first family is given for quasi-particles on $\NNN$ of color $i=1,2,3,4$ and charges $n_1$ and $n_2$ such that $n_2\leqslant n_1$:
\begin{align}
\left(\frac{d^p}{dz^p} x_{n_2\alpha_i} (z)\right)x_{n_1\alpha_i} (z)
=A_p(z) x_{(n_1 +1)\alpha_i} (z)
+B_p(z)  \frac{d^p}{dz^p} x_{(n_1+1)\alpha_i} (z), \label{r1}\tag{$r_1$}
\end{align}
where $p=0, 1,\ldots, 2 n_2 -1$ and $A_p(z), B_p(z)$ are some formal series with coefficients in the set of quasi-particle polynomials; 
 see  \cite{FS,G,JP}. 
As demonstrated in \cite[Remark 4.6]{JP}, see also  \cite[Lemma 2.2.1]{B1},
  relations \eqref{r1} can be used   to express $2n_2$ monomials of the form  
$$
x_{n_2\alpha_i}(m_2)x_{n_1\alpha_i}(m_1) ,\, \ldots \,  ,
 x_{n_2\alpha_i}(m_2-2n_2+1)x_{n_1\alpha_i}(m_1+2n_2-1)
$$
as a linear combination of monomials
$$
x_{n_2\alpha_i}(p_2)x_{n_1\alpha_i}(p_1) \quad \text{such that} \quad p_2 \leqslant m_2- 2n_2,\quad  p_1\geqslant m_1+2n_2\fand p_1 +p_2=m_1+m_2
$$
and monomials which contain a quasi-particle of color $i$ and charge $n_1+1$, thus possessing the greater charge-type. 
In particular, for $n_2 = n_1$ one can express $2n_2$ monomials 
$$
x_{n_2 \alpha_i}(m_2)x_{n_2 \alpha_i}(m_1)\quad\text{with} \ \ m_1-2n_2< m_2 \leqslant   m_1
$$
as a linear combination of monomials
$$
x_{n_2\alpha_i}(p_2)x_{n_2\alpha_i}(p_1)  \quad\text{such that} \quad p_2\leqslant p_1-2n_2$$
and monomials   which contain a quasi-particle of color $i$ and charge $n_2+1$, thus possessing the greater charge-type; cf. \cite[Corollary 2.2.2]{B1}. 

The second family of relations for quasi-particles on $\NNN$  is given by
\begin{align}
(z_{1}-z_{2})^{ M_{i}}x_{n_{i-1}\alpha_{i-1}}(z_{1})
x_{n_{i}\alpha_{i}}(z_{2})
=(z_{1}-z_{2})^{M_{i} }
x_{n_{i}\alpha_{i}}(z_{2})
x_{n_{i-1}\alpha_{i-1}}(z_{1}) \label{r2}\tag{$r_2$}\\
\text{for}\quad
i=2,3,4,\quad
M_{i}=\min \left\{\textstyle\frac{\nu_i}{\nu_{i-1}} n_{i-1},n_i\right\}\fand
n_{i-1},n_i\in\NN.\non
\end{align}
They follow by a direct computation employing the commutator formula for vertex operators; see, e.g., \cite[Chap. 6.2]{LL}. 
\begin{rem}
Due to \eqref{r2}, 
 the quasi-particles of colors $ 1$ and $ 2$ and the quasi-particles of colors $ 3$ and $ 4$ interact as the quasi-particles of colors $1$ and $2$ for the affine Lie algebra $A_2^{(1)}$ while the quasi-particles of colors $ 2$ and $ 3$ interact as the quasi-particles of colors $1$ and $2$ for the affine Lie algebra $B_2^{(1)}$.
\end{rem}

Let $\BNcbar$ be the set of all monomials \eqref{monomial4bar} satisfying difference  conditions \eqref{d1} and \eqref{d2} (with $l=4$ and
$i'=i-1$ for all $i=1,2,3,4$).
Using relations \eqref{r1} and \eqref{r2} and arguing as in \cite{B1,G} one can show that every monomial   of the form \eqref{monomial4bar} satisfying \eqref{extra} can be expressed as a linear combination of some monomials in $ \BNcbar$, so that $\BNcbar$ spans the quotient $U(\ntplus)/ \IN$.   The proof goes by induction on the  charge-type and total energy of quasi-particle monomials and relies on the properties of strict linear order \eqref{order1}.
Roughly speaking, difference condition \eqref{d1} follows from relations  \eqref{r1} for $n_2=n_1$; the last summand $- 2(p-1) n_{p,i}$ in \eqref{d2} follows from relations  \eqref{r1} for $n_2<n_1$; the  sum in \eqref{d2} follows from  \eqref{r2}. Finally, the first summand $-n_{p,i}$ in \eqref{d2} is due to
the fact that  each summand on the right hand side of
$$
x_{m\alpha_i}(n)=\sum_{n_1+\ldots +n_m =n} x_{\alpha_i}(n_1)\cdots x_{\alpha_i}(n_m),\quad\text{where }i=1,2,3,4\text{ and }n>-m,
$$
contains at least one quasi-particle $x_{\alpha_i}(n_j)$ with $n_j\geqslant 0$, so that $x_{m\alpha_i}(n)$
belongs to 
$\IN$ 
for $n>-m$.

We now consider
$U(\ntplus)/  \I$. 
It is clear that all monomials \eqref{monomial4bar}, regarded as elements of $U(\ntplus)/  \I$ and  satisfying difference conditions \eqref{d1} and \eqref{d2}, form a spanning set for the quotient  $U(\ntplus)/  \I$. 
However,  the form of the ideal $\I$, as defined in  \eqref{ideal}, implies additional relations
\beq\label{r3}\tag{$r_3$}
\bar{x}_{n\alpha_i}(z)=0\qquad\text{for all}\quad n\geqslant k\nu_i +1\fand i=1,2,3,4,
\eeq
in $U(\ntplus)/  \I$ which we now use  to  obtain a smaller spanning set; recall Remark   \ref{remarkLP}.

Suppose that monomial \eqref{monomial4bar}  satisfies difference conditions \eqref{d1} and \eqref{d2} and contains a quasi-particle $\bar{x}_{n_{j,i}\alpha_i }(m_{j,i})$
of charge $n_{j,i}\geqslant k\nu_i +1$ and color $1\leqslant i\leqslant 4$.
Clearly, such monomial 
coincides with the coefficient of the variables
\beq\label{vars3}
z_{r_{4}^{(1)},4}^{-m_{r_{4}^{(1)},4}-n_{r_{4}^{(1)},4}}\cdots z_{j,i}^{-m_{j,i}-n_{j,i}}
\cdots
z_{2,1}^{-m_{2,1}-n_{2,1}}
z_{1,1}^{-m_{1,1}-n_{1,1}}
\eeq
in the generating function
$$
  \bar{X}=\bar{x}_{n_{r_{4}^{(1)},4}\alpha_{4}}(z_{r_{4}^{(1)},4}) \cdots
	\bar{x}_{n_{j,i}\alpha_i} (z_{j,i})
	\cdots
	\bar{x}_{n_{2,1}\alpha_{1}}(z_{2,1}) 
	\bar{x}_{n_{1,1}\alpha_{1}}(z_{1,1})  .
$$
Introduce the Laurent polynomial
$$
P=\prod_{i=2}^4
\prod_{q=1}^{r_{i-1}^{(1)}}
\prod_{p=1}^{r_{i}^{(1)}}
\left(
1-\frac{z_{q,i-1}}{z_{p,i}}
\right)^{\min
\left\{
\textstyle \frac{\nu_i}{\nu_{i-1}} n_{q,i-1},n_{p,i}   
\right\}
}.
$$
By combining relations \eqref{r2} and \eqref{r3} we find $P\bar{X}=0$ as the operator
$\bar{x}_{n_{j,i}\alpha_i} (z_{j,i})$ in $P\bar{X}$ can be moved all the way to the right, thus annihilating the expression. By taking the coefficient of the variables \eqref{vars3} in $P\bar{X}=0$ we express \eqref{monomial4bar} as a linear combination
of some   quasi-particle monomials  of the same charge-type and of the same total energy $m_{r_4^{(1)},4} + \ldots + m_{1,1}$,  which are greater than 
\eqref{monomial4bar} with respect to  linear order \eqref{order1}.
However, there exists only finitely many such quasi-particle monomials which are nonzero.
Hence, by repeating the same procedure for an appropriate number of times, now starting with these new monomials, we find, after finitely many steps, that \eqref{monomial4bar}  equals zero. Therefore, we conclude that the set $\BLcbar$  of all monomials \eqref{monomial4bar} in  $U(\ntplus)/  \I$ which satisfy  difference conditions \eqref{d1}, \eqref{d2} and
\eqref{d3}
forms a spanning set for $U(\ntplus)/  \I$.

\subsection{Proof of Theorems \ref{thm_baza} and \ref{thm_prezentacija}}\label{subsec56} 

In Section \ref{subsec54}, we established the linear independence of the sets $\BNc$ and  $\BLc$. We now prove that they span the principal subspaces $\W$ and $\WN$, thus finishing the proof of Theorem \ref{thm_baza}. Moreover, as a consequence of the proof, we  obtain  the presentations of the principal subspace $\W$ given by Theorem \ref{thm_prezentacija}.
Introduce the 
natural surjective map
\begin{align*}
\fN\,\colon\, U(\ntplus)\,&\,\to\, W_{\NNN}\\
a\,&\,\mapsto\, a\cdot v_{\NNN},\non
\end{align*}
so that we can consider the cases $V=\LLL$ and $V=\NNN$ simultaneously. 
Recall that    
the surjective map $\f$ is given by \eqref{map},
the left ideal $ \I$ is defined by \eqref{ideal} and $\IN  =U(\ntplus) \ntplusgeq$ .

Let $V$ be $\NNN$ or $\LLL$. It is clear that the left ideal $\IV$ belongs to the kernel of $\fV$. Hence, there exists a unique map
\beq\label{mapbar}
\fVbar\colon U(\ntplus)/\IV \to W_V\quad\text{such that}\quad \fV=\fVbar\,\pi_V,
\eeq
where $\pi_V $ is the canonical epimorphism $U(\ntplus)\to  U(\ntplus)/\IV$. 
The map $\fVbar$ is surjective as $\fV$ is surjective and, furthermore,  it maps bijectively $\BVcbar$ to $\BVc$.
Therefore, the linearly independent set $\BVc$ spans the principal subspace $W_V$ and so it forms a basis of $W_V$, which proves    Theorem \ref{thm_baza}. This implies that the map \eqref{mapbar} is a vector space isomorphism, so, in particular, we conclude that $\ker \f = \I$, thus proving Theorem \ref{thm_prezentacija}.

\section{Proof of Theorems \ref{thm_baza}, \ref{thm_baza_DE} and \ref{thm_prezentacija_DE} in types \texorpdfstring{$D$}{D} and \texorpdfstring{$E$}{E} }\label{sec60}

In this section, unless stated otherwise, we denote by $\gtilde$ the affine Lie algebra of type $D_l^{(1)}$, $E_6^{(1)}$, $E_7^{(1)}$, $E_8^{(1)}$.
First, we give an outline of the proof of Theorem  \ref{thm_baza}   for $\gtilde$.   As the generalization of the arguments from Section \ref{subsec55}  is straightforward, we only discuss the proof of linear independence.
It  relies on the coefficients of certain level 1 intertwining operators 
 and on the vertex operator algebra construction   of  basic modules, thus resembling the corresponding proofs in types $A_l^{(1)}$, $B_l^{(1)}$ and $C_l^{(1)}$; see \cite{G,B1,B2}. 
In Section \ref{voacon} we recall the  aforementioned construction     while   in Section \ref{op}  we demonstrate  how to use the corresponding operators to complete the  proof of Theorem   \ref{thm_baza}. 
Next, in Section \ref{proof_DE} we add some details as compared to Sections \ref{sec50} and \ref{op} to take care of the modifications needed to carry out the argument for rectangular weights, i.e. to prove Theorems  \ref{thm_baza_DE} and \ref{thm_prezentacija_DE}.
Finally, in Section \ref{a} we construct different quasi-particle bases in type $E$, such that their linear independence can be verified by employing the operator $A_\theta$ associated with the   maximal root $\theta$, thus resembling the corresponding proof in type $F$ from Section \ref{sec50}.

\subsection{Vertex operator algebra construction   of  basic modules}\label{voacon}
We follow \cite{FLM,LL}
to review the vertex operator algebra construction   of the basic modules $L(\Lambda_i)$ \cite{FK,S}. 
Set 
$$\hhat_{*}=\bigoplus_{m \in \ZZ\setminus\{0\}}(\h \otimes t^m) \oplus \CC c\fand \hhle=\bigoplus_{m <0}\h \otimes t^m.$$
Let $M(1)=S(\hhle)$ be the Fock space for the Heisenberg algebra $\hhat_{*}$ with $h(-m)$ acting as multiplication and $h(m)$ acting  as differentiation on $M(1)$  for all $h \in \h$ and $ m\in \NN$. Consider the tensor products 
$$V_P = M(1)\otimes \CC\left[P\right]\fand V_Q = M(1)\otimes  \CC\left[Q\right], $$where $\CC\left[P\right]$ and $\CC\left[Q\right]$ denote the group algebras of the weight lattice $P$ and of the root lattice $Q$ with respective bases   $\{ e^{\lambda}: \lambda \in P\}$ and $\{ e^{\alpha}: \alpha \in Q\}$. We use the identification of group elements $e^{\lambda} = 1 \otimes e^{\lambda} \in V_P$.

Let $e_{\lambda}\colon V_P\to V_P$ be the linear isomorphism defined by
\beq\label{voaop3}
e_{\lambda}e^{\mu}= \epsilon(\lambda, \mu)e^{\mu+\lambda} \quad\text{for all }\lambda,\mu\in P,
\eeq 
where $\epsilon $ is a certain map $ P\times P \to \CC^{\times}$ satisfying $\epsilon(\lambda, 0)=\epsilon(0,\lambda)=1$ for all $\lambda\in P$; see \cite{FLM,LL} for  more details. 
The space $V_Q$ is equipped with a structure of a vertex operator algebra, with $V_P$  being a $V_Q$-module, by
\beq\non 
Y (e^{\lambda},  z) = E^{-}(-\lambda, z)E^{+}(-\lambda, z)e_{\lambda}z^{\lambda},\quad
\text{where} \quad E^{\pm}(-\lambda, z)=\exp \left( \sum_{n \leqslant 1}\lambda(\pm n)\frac{z^{\mp n}}{\pm n}\right)
 \eeq
 and   $z^{\lambda} = 1 \otimes z^{\lambda}$ acts by $ z^{\lambda}e^{\mu} = z^{\langle \lambda, \mu \rangle}$ for all $\lambda, \mu \in P$.
Moreover, the space $V_P$ acquires a structure of level one $\gtilde$-module 
via
$$x_{\alpha}(m)= \rez_z z^m Y (e^{\alpha}, z)\qquad \text{for }\alpha \in R\text{ and }m \in \ZZ.$$
With respect to this action, the space   $V_Q$ is identified with the standard module  $L(\Lambda_0)$ while the irreducible $V_Q$-submodules $V_Qe^{\lambda_i}$ of $V_P$  are identified with the standard  modules $ L(\Lambda_i)$ for all $i$ such that the weight $\Lambda_i$ is of level one.
The corresponding  highest weight vectors are  $v_{L(\Lambda_0)} =1$ and $v_{L(\Lambda_i)} = e^{\lambda_i}$.

\subsection{Operators \texorpdfstring{$A_{\lambda_i}$}{A-lambda} and   proof of Theorem \ref{thm_baza}}\label{op}
Let $b\in\BL$ be a quasi-particle monomial as in \eqref{monomial}, of charge-type $\Cc$ and dual charge-type  
$$
\Dc=\left(r^{(1)}_{l}, \ldots , r^{(k)}_{l};\, \ldots \,
 r^{(1)}_{2}, \ldots , r^{(k)}_{2}; \,
r^{(1)}_{1}, \ldots , r^{(k)}_{1}\right). 
$$
We now demonstrate how to carry out the procedure from Section \ref{subsec53}, i.e. how to reduce   $b$   to obtain a new  monomial  $b'\in \BL$ such that its  charge-type $\Cc'$ satisfies $\Cc'<\Cc$ with respect to linear order \eqref{order1}.
Denote by $I(\cdot, z)$ the intertwining operator of type $\binom{V_P}{V_P \,\,  V_Q}$,
$$
I(w, z)v =\exp (zL(-1))Y(v, -z)w, \ \ \text{where} \ \  w \in V_P, v \in V_Q,
$$
see \cite[Sect. 5.4]{FHL}. 
For $i=1,\ldots ,l$ let $A_{\lambda_i}$ be the constant term of $I(e^{\lambda_i}, z)$, that is  
\beq\non 
A_{\lambda_i}=\rez_z z^{-1}I(e^{\lambda_i}, z).
\eeq
We have
\beq \label{koefintvec}
A_{\lambda_i}v_{L(\Lambda_0)}=e^{\lambda_i}\quad\text{for all }i=1,\ldots ,l.
\eeq
In contrast with Section \ref{subsec53}, which relies on the application of the operators $A_{\theta}$ and $e_{\theta}$, we here make use of   $A_{\lambda_i}$ and $e_{\lambda_i}$ in a similar fashion. In particular, we employ the following property of   $e_{\lambda_i}$:
\beq \label{sv1}
e_{\lambda_i} x_{\alpha_j}(m)=(-1)^{\delta_{ij}}x_{\alpha_j}(m-\delta_{ij} )e_{\lambda_i}\quad\text{for all }i,j=1,\ldots ,l\text{ and }m \in \ZZ,
\eeq
 see \cite{CLM3} for more details. Moreover, we use the fact that the operators $A_{\lambda_i}$ commute with the action of $x_{\alpha}(z)$ for all $\alpha \in R$, which comes as a consequence of the commutator formula for  $x_{\alpha}(z)$ and $I(e^{\lambda_i}, z)$; see   \cite[Sect. 5.4]{FHL}.

As in Section \ref{subsec52}, denote by $\pi_{\Dc}$ the projection of the principal subspace $W_{L(k\Lambda_0)}$ on  
$$ 
W_{(r^{(k)}_{l};r^{(k)}_{l-1};\ldots;r_{2}^{(k)};r_{1}^{(k)})}
\otimes \cdots \otimes  
W_{(r^{(1)}_{l};r^{(1)}_{l-1};\ldots;r_{2}^{(1)};r_{1}^{(1)})}
\subset 
W_{L(\Lambda_0)}^{\otimes k}\subset L(\Lambda_0)^{\otimes k},
$$
where  
$W_{(r^{(t)}_{l};\ldots; r_{2}^{(t)};r_{1}^{(t)})}$
denote the $\mathfrak{h}$-weight subspaces of  the level $1$ principal subspace $W_{L(\Lambda_0)}$ of the weight $r^{(t)}_{l}\alpha_l+\cdots+r^{(t)}_{2}\alpha_2+ r_{1}^{(t)}\alpha_1 \in R$. 
Arguing as in Section \ref{subsec53}, we conclude that the image of $\pi_{\Dc}\, b v_{L(k\Lambda_0)}\in \W\subset W_{L(\Lambda_0)}^{\otimes k}$ with  respect to the operator
$$
(A_{\lambda_1})_s \coloneqq\underbrace{1\ot\cdots\ot 1}_{k-s}\ot  A_{\lambda_1} \ot \underbrace{1\ot\cdots \ot 1}_{s-1},\qquad\text{where}\qquad s=n_{1,1},
$$
equals the coefficient of the variables
\beq  \label{varsDE}
 z_{r_{l}^{(1)},l}^{-m_{r_{l}^{(1)},l}-n_{r_{l}^{(1)},l}}\cdots 
z_{2,1}^{-m_{2,1}-n_{2,1}}
z_{1,1}^{-m_{1,1}-n_{1,1}}
\eeq
 in the expression
\beq\label{513}
(A_{\lambda_1})_s \, \pi_\Dc\, x_{n_{r_{l}^{(1)},l}\alpha_{l}}(z_{r_{l}^{(1)},l}) \cdots 
x_{n_{2,1}\alpha_{1}}(z_{2,1})
x_{n_{1,1}\alpha_{1}}(z_{1,1}) v_{L(k\Lambda_0)}.
\eeq
Moreover, the $s$-th tensor factor in \eqref{513} (from the right) equals
$$
F_s=\Big(x_{n_{r^{(s)}_{l},l}^{(s)}\alpha_{l}}(z_{r_{l}^{(s)},l})\cdots   x_{n_{1,l}^{(s)}\alpha_{l}}(z_{1,l})\Big)
\cdots
\Big( x_{n_{r^{(s)}_{1},1}^{(s)}\alpha_{1}}(z_{r_{1}^{(s)},1})\cdots     x_{n{_{1,1}^{(s)}\alpha_{1}}}(z_{1,1}) \Big) e^{\lambda_1},
$$
where the integers $n_{p,i}^{(t)}$ are given by
$$
0 \leqslant  
 n^{(k)}_{p,i}\leqslant    \ldots \leqslant  n^{(2)}_{p,i} \leqslant  n^{(1)}_{p,i} \leqslant 1
\fand 
n_{p,i}=\sum_{t=1}^k n^{(t)}_{p,i}\qquad\text{for all }  i=1,\ldots, l.
$$

By combining \eqref{voaop3} and \eqref{sv1} we get 
\beq\label{opetDE}
F_s =(-1)^{r_{1}^{(s)}}e_{\lambda_1}\, F_s\, z_{r_{1}^{(s)},1}\cdots z_{2,1} z_{1,1}.
\eeq
Recall the notation from \eqref{briefly}. By taking the coefficient of variables  \eqref{varsDE} in \eqref{opetDE} we have
$$
(A_{\lambda_1})_s\pi_{\Dc}\,bv_{L(k\Lambda_0)}
=(-1)^{r_{1}^{(s)}}(e_{\lambda_1})_s  \,\pi_{\Dc}\,b^{+}v_{L(k\Lambda_0)},
$$
where $(e_{\lambda_1})_s $ denotes the action of $e_{\lambda_1}$ on the $s$-th tensor factor (from the right) and  
\begin{align*}
b^{+}
=b_{\alpha_{l}}\, \cdots\, b_{\alpha_{2}}b_{\alpha_{1}}^{<s}\,b_{\alpha_{1}}^{s} \quad\text{with}\quad
&b_{\alpha_{1}}^{<s}=x_{n_{r_{1}^{(1)},1}\alpha_{1}}(m_{r_{1}^{(1)},1})\cdots x_{n_{r_{1}^{(s)}+1,1}}(m_{r_{1}^{(s)}+1,1})\\  
\quad\text{and }\quad &b_{\alpha_{1}}^{s}= x_{n_{r_{1}^{(s)},1}}(m_{r_{1}^{(s)},1}+1)\cdots x_{n_{1,1}\alpha_{1}}(m_{1,1}+1).&
\end{align*}
Note that the   monomial $b^+$ belongs to $\BL$.  

 As in Section \ref{subsec53}, we can now continue to apply this procedure until we obtain a  monomial  $b'\in \BL$ of  charge-type   $\Cc'<\Cc$. Finally, by repeating the arguments from Section \ref{subsec54} almost verbatim, we can prove the linear independence of the set $\BLc$. However, in contrast with  Section \ref{subsec54}, where the quasi-particle basis in type $F_4^{(1)}$ was reduced to a basis in type $C_3^{(1)}$,   the quasi-particle basis in type $D_l^{(1)}$, $E_6^{(1)}$, $E_7^{(1)}$ or $E_8^{(1)}$ is reduced, after sufficient number of steps,  to a basis in type $A_1^{(1)}$ from Theorem \ref{thm_baza}. Note that such a modification of the argument is possible  because we have the operators $A_{\lambda_i}$ and $e^{\lambda_i}$ satisfying \eqref{koefintvec} and \eqref{sv1} at our disposal; cf. corresponding properties
\eqref{eq:21} and \eqref{eq:22} for $\alpha=\theta$.

\subsection{Proof of Theorems \ref{thm_baza_DE} and \ref{thm_prezentacija_DE}} \label{proof_DE}

Let $\gtilde$ be the affine Lie algebra of type $D_l^{(1)}$, $E_6^{(1)}$ or $E_7^{(1)}$ and let $\Lambda=k_0\Lambda_0 + k_j\Lambda_j$ be an arbitrary rectangular weight, as  defined in Section \ref{subsec23}.
First, we prove that the set $\BLcc$ is linearly independent.
As in Section \ref{subsec52}, we  regard the standard module $L(\Lambda)$ as the submodule of $L(\Lambda_0)^{\otimes k_0}\otimes L(\Lambda_j)^{\otimes k_j}$ generated by the highest weight vector $v_{L(\Lambda)}= v_{L(\Lambda_0)}^{\otimes k_0}\otimes v_{L(\Lambda_j)}^{\otimes k_j}$.
Suppose that
\begin{equation}\label{eq:d1DE67}
\sum_{a \in A}
c_{a}\, b^a v_{L(\Lambda)}=0, \quad\text{where}\quad c_a\in\CC,\, c_a\neq 0\text{ for all }a\in A,
\end{equation}
 $A$ is a finite nonempty set and all $b^a\in\BLL$ posses the same color-type. Let $b^{a_0}$ be a monomial of dual charge-type $\Dc$ such that $b^{a_0}< b^a$ for all $a\in A$, $a\neq a_0$, with respect to linear  order \eqref{order1}.  
 Applying the corresponding projection $\pi_{\Dc}$, which is defined in parallel with Section \ref{op}, on  linear combination \eqref{eq:d1DE67}, we obtain 
 \begin{equation}\label{eq:d2DE67}
\sum_{a \in A}
c_{a}\, \pi_{\Dc}\,b^a v_{L(\Lambda)}=0.  
\end{equation}
By Section \ref{voacon}, the highest weight vector $v_{L(\Lambda)}$ is identified with $1^{\ot k_0} \ot (e^{\lambda_j})^{\ot k_j}$, so that, due to \eqref{voaop3}, we have
$$
v_{L(\Lambda)}=1^{\ot k_0} \ot (e^{\lambda_j})^{\ot k_j}
=(1^{\ot k_0} \ot e_{\lambda_j}^{\ot k_j}) \,1^{\ot k}=(1^{\ot k_0} \ot e_{\lambda_j}^{\ot k_j}) v_{L(\Lambda_0)}^{\ot k}\quad\text{for }k=k_0+k_j.
$$
Therefore,
linear combination \eqref{eq:d2DE67} can be expressed as
$$
\sum_{a \in A}
c_{a}\, \pi_{\Dc}\,b^a (1^{\ot k_0} \ot e_{\lambda_j}^{\ot k_j}) v_{L(\Lambda_0)}^{\ot k}=0.  
$$
By employing \eqref{sv1} to move   $1^{\otimes k_0}\otimes e_{\lambda_j}^{\otimes k_j}$ all the way to the left and then  dropping the invertible  operator,  we get
$$
\sum_{a \in A}
c_{a}\, \pi_{\Dc}\, \tilde{b}^a  v_{L(\Lambda_0)}^{\otimes k } =0
$$
for some quasi-particle monomials $\tilde{b}^a$.
Using  the fact that the original monomials $b^a$ belong to $\BLL$ one can verify that
all $\tilde{b}^a$ belong to $\BL$. Therefore, due to the identification
$v_{L(\Lambda_0)}^{\otimes k } =v_{L(k\Lambda_0)}$, the linear independence of the set $\BLcc$ now follows from
Theorem \ref{thm_baza}.

We now proceed as in Section \ref{subsec55} and construct a spanning set for   $U(\ntplus)/I_{L(\Lambda)}$. 
We denote the image of the element $x\in U(\ntplus)$ in the quotient 
$U(\ntplus)/I_{V}$, where $V=L(k\Lambda_0),L(\Lambda)$, by   $\bar{x}$.
Let $\BLccbar$ be the set of all monomials
\beq\label{monomialbar}\tag{$\bar{m} $}
\bar{b}=
\Big(\bar{x}_{n_{r_{l}^{(1)},l}\alpha_{l}}(m_{r_{l}^{(1)},l})\ldots \bar{x}_{n_{1,l}\alpha_{l}}(m_{1,l})\Big)\ldots 
\Big( \bar{x}_{n_{r_{1}^{(1)},1}\alpha_{1}}(m_{r_{1}^{(1)},1})\ldots  \bar{x}_{n_{1,1}\alpha_{1}}(m_{1,1})\Big)
\eeq
in  $U(\ntplus)/I_{L(\Lambda)}$
such that
their charges and energies satisfy 
\beq\label{uvjeti}
n_{r_{i}^{(1)},i}\leqslant \ldots \leqslant n_{1,i}\fand 
m_{r_{i}^{(1)},i}\leqslant \ldots \leqslant m_{1,i}\qquad\text{for all } i=1,\ldots,l 
\eeq
and difference conditions \eqref{d1}, \eqref{d2DE67} and \eqref{d3}. It is clear from Theorem \ref{thm_baza} that the set of all monomials $\bar{b}$ as in \eqref{monomialbar} satisfying
\eqref{uvjeti} and difference conditions \eqref{d1}, \eqref{d2} and \eqref{d3} spans the quotient $U(\ntplus)/I_{L(\Lambda)}$. Suppose  that   such a monomial $\bar{b}$  does not satisfy the more restrictive condition \eqref{d2DE67}.
Introduce the generating functions
$$
\bar{X}_V=
 \bar{x}_{n_{r_{l}^{(1)},l}\alpha_{l}}(z_{r_{l}^{(1)},l}) \cdots 
 \bar{x}_{n_{2,1}\alpha_{1}}(z_{2,1})
 \bar{x}_{n_{1,1}\alpha_{1}}(z_{1,1})\quad\text{for }V=L(k\Lambda_0),L(\Lambda),
$$
where the subscript $V$ indicates that the coefficients of   $\bar{X}_V$ are regarded as elements of the quotient $U(\ntplus)/I_{V}$. 
Clearly, $\bar{b}$ equals the coefficient of the variables
$$
 z_{r_{l}^{(1)},l}^{-m_{r_{l}^{(1)},l}-n_{r_{l}^{(1)},l}}\cdots 
z_{2,1}^{-m_{2,1}-n_{2,1}}
z_{1,1}^{-m_{1,1}-n_{1,1}}
$$
in $\bar{X}_{L(\Lambda)}$.
By Theorem
\ref{thm_prezentacija} we have $W_{L(k\Lambda_0)}\cong U(\ntplus)/I_{L(k\Lambda_0)}$.
Therefore, due to commutation relations
$$
(z_{p,i}-z_{q,i'})^{ M_{i}}
x_{n_{q,i'}\alpha_{i'}}(z_{q,i'})
x_{n_{p,i}\alpha_{i}}(z_{p,i})
=(z_{p,i}-z_{q,i'})^{M_{i} }
x_{n_{p,i}\alpha_{i}}(z_{p,i})
x_{n_{q,i'}\alpha_{i'}}(z_{q,i'})  
$$
with $M_{i}=\min \left\{\textstyle  n_{q,i'},n_{p,i}\right\}$,
  the product $P \bar{X}_{L(k\Lambda_0)}$, where $P$ is the Laurent polynomial
$$
P=\prod_{i=2}^l
\prod_{q=1}^{r_{i'}^{(1)}}
\prod_{p=1}^{r_{i}^{(1)}}
\left(
1-\frac{z_{q,i'}}{z_{p,i}}
\right)^{\min
\left\{ n_{q,i'},n_{p,i}   
\right\}
},
$$
belongs to
\beq\label{equation123}
\prod_{i=1}^{l} \prod_{p=1}^{r_{i}^{(1)}} z_{p,i}^{-\sum_{q=1}^{r_{i'}^{(1)}}  \min\left\{n_{q,i'}, n_{p,i}\right\}}
 ( U(\ntplus)/I_{V})[[z_{r_{l}^{(1)},l},\ldots ,z_{1,1}]]
\eeq 
for $V=L(k\Lambda_0)$.
This implies that the product $P \bar{X}_{L(\Lambda)}$ belongs to
\eqref{equation123} for $V=L(\Lambda)$.
However, every vertex operator  $\bar{x}_{n\alpha_i}(z)$ in the product $P \bar{X}_{L(\Lambda)}$   can be moved all the way to the right. 
 By \eqref{idealDE} we have $x_{\alpha_j}(-1)^{k_0+1}\in I_{L(\Lambda)}$, so that each  $\bar{x}_{n\alpha_i}(z)$ increases the power of its variable $z$ in \eqref{equation123} by $\sum_{t=1}^n \delta_{i j_t}$. Therefore, we have
\beq\label{equation1234}
P \bar{X}_{L(\Lambda)}
\in
\prod_{i=1}^{l} \prod_{p=1}^{r_{i}^{(1)}} z_{p,i}^{\sum_{t=1}^{n_{p,i}} \delta_{i j_t}-\sum_{q=1}^{r_{i'}^{(1)}}  \min\left\{n_{q,i'}, n_{p,i}\right\}}
 ( U(\ntplus)/I_{L(\Lambda)})[[z_{r_{l}^{(1)},l},\ldots ,z_{1,1}]].
\eeq 
By employing \eqref{equation1234} and  repeating the corresponding part of the proof of \cite[Thm. 5.1]{G} the monomial $\bar{b}$ can be expressed as a linear combination of elements of 
 $\BLccbar$. Hence we conclude that the set $\BLccbar$ spans the quotient $U(\ntplus)/I_{L(\Lambda)}$.

Since the ideal $  I_{L(\Lambda)}$ belongs to the kernel of the map $f_{L(\Lambda)}$   defined by \eqref{map},
Theorems \ref{thm_baza_DE} and \ref{thm_prezentacija_DE} can be now verified by arguing as in Section \ref{subsec56}.

\subsection{Operator \texorpdfstring{$A_\theta$}{A-theta} revisited} \label{a}
As with type $G$ in \cite{B3}, the linear independence proof in type $F$ employs  certain operator $A_\theta = x_{\theta}(-1)$; see Sections \ref{subsec53} and   \ref{subsec54}.   
In this section we show that the operator $A_\theta$ associated with the   maximal root $\theta$  in type  $E$
 can be also used to verify the linear independence, but  of different  bases. 
First, for $\g=E_l$ set
$$
(i_1,\ldots ,i_l; i''_3,\ldots ,i''_l) =
\begin{cases}
(1,7,2,3,4,5,6,8; 1,2,3,4,5,5),&\text{if }l=8,\\
(1,6,5,4,3,2,7; 6,5,4,3,3),&\text{if }l=7,\\
(6,5,4,3,2,1; 5,4,3,2),&\text{if }l=6.
\end{cases}
$$
Introduce the following families of difference conditions:
\begin{align}
 & m_{p,i_j}\leqslant  -n_{p,i_j}
- 2(p-1) n_{p,i_j}\quad \text{for}\quad p=1,\ldots, r_{i_j}^{(1)}\fand j=1,2 ;\label{cd1}\tag{$c_2^{0}$}\\
&   m_{p,i_j}\leqslant  -n_{p,i_j}
+ \sum_{q=1}^{r_{i_j''}^{(1)}}\min\left\{n_{q,i_j''},n_{p,i_j}\right\}
- 2(p-1) n_{p,i_j}\quad \text{for}\quad  p=1,\ldots, r_{i_j}^{(1)}; \label{cd2}\tag{$c_2^{j}$}\\
&   m_{p,i_j}\leqslant  -n_{p,i_j}
+ \sum_{s=i''_j,i_k}\sum_{q=1}^{r_{s}^{(1)}}\min\left\{n_{q,s},n_{p,i_j}\right\}
- 2(p-1) n_{p,i_j}\quad
\text{for}\quad  p=1,\ldots, r_{i_j}^{(1)}.\label{cd3}\tag{$c_2^{j,k}$}
\end{align}
Let $\BL^{E_l}$ be the set   all monomials  \eqref{monomial} which satisfy 
\eqref{uvjeti} and 
the following difference conditions:
\begin{itemize}
\item[$\circ$] \eqref{d1}, \eqref{d3}, \eqref{cd1}, \eqref{cd2} for $j=3,4,5,6,8$ and \eqref{cd3} for $(j,k)=(7,2)$ if $l=8$;
\item[$\circ$] \eqref{d1}, \eqref{d3}, \eqref{cd1}, \eqref{cd2} for $j=3,4,5,7$ and \eqref{cd3} for $(j,k)=(6,1)$ if $l=7$;
\item[$\circ$] \eqref{d1}, \eqref{d3}, \eqref{cd1}, \eqref{cd2} for $j=3, 5,6$ and \eqref{cd3} for $(j,k)=(4,1)$ if $l=6$.
\end{itemize}

\begin{pro}\label{proposition}
For any positive integer $k$ the set 
$$\BLc^{E_l}=\left\{b v_{\LLL} : b\in \BL^{E_l}\right\}\subset W_{\LLL}$$
 forms a basis of the principal subspace $W_{\LLL}$ of the standard module $\LLL$ for the affine Lie algebra in type $E_l^{(1)}$.
\end{pro}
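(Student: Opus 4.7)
The plan is to mimic the proof of Theorem~\ref{thm_baza} in type $F$ (Sections~\ref{subsec51}--\ref{subsec56}), but with the operators $A_{\lambda_i}$ from Section~\ref{op} replaced by the operator $A_\theta=x_\theta(-1)$ associated to the maximal root $\theta$ of $\g=E_l$, much as was done for $F_4^{(1)}$ in Sections~\ref{subsec53}--\ref{subsec54}. The tuples $(i_1,\ldots,i_l)$ and $(i_3'',\ldots,i_l'')$ encode the order in which colors will be peeled off: $i_1$ is chosen so that $\alpha_j(\theta^\vee)=\delta_{j,i_1}$, $i_2$ so that $i_1$ and $i_2$ are Dynkin-non-adjacent, and $i_j$ for $j\geqslant 3$ so that its Dynkin neighbors already listed among $i_1,\ldots,i_{j-1}$ coincide with the colors appearing in \eqref{cd2} or \eqref{cd3}. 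A direct computation using Figure~\ref{figure} and the explicit form of $\theta$ confirms that $i_1=1$ for $l=7,8$ and $i_1=6$ for $l=6$, and that removing $\{i_1,i_2\}$ from the Dynkin diagram of $E_l$ leaves a Dynkin subdiagram of type $D_6$, $D_5$ or $A_4$ for $l=8,7,6$ respectively.

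For the linear independence of $\BLc^{E_l}$, I would repeat the argument of Sections~\ref{subsec53}--\ref{subsec54} almost verbatim. Suppose $\sum_{a\in A}c_a b^a v_{\LLL}=0$ is a nontrivial linear relation with $b^{a_0}$ minimal in the order~\eqref{order1}. Apply the appropriate projection $\pi_\Dc$ defined as in Section~\ref{op}, and then the operator $(A_\theta)_s$ with $s=n_{1,i_1}$. Using \eqref{eq:21} and \eqref{eq:22} with $\alpha=\theta$ together with $\alpha_{i_1}(\theta^\vee)=1$ and $\alpha_j(\theta^\vee)=0$ for all other $j$, iterated application of the procedure raises the energies of all color-$i_1$ quasi-particles in $b^{a_0}$, which can then be removed by reinterpreting $x_{\alpha_{i_1}}(-1)v_{L(\Lambda_0)}=-e_{\alpha_{i_1}}v_{L(\Lambda_0)}$. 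Condition~\eqref{cd1} for $j=1$ is exactly what makes this step legitimate. Because $i_2$ is Dynkin-non-adjacent to $i_1$, the same procedure applied next to color $i_2$ removes those quasi-particles without introducing any interaction with the previously removed ones, which explains \eqref{cd1} for $j=2$. After these two reduction steps the surviving monomials involve only colors from the $D_6$, $D_5$ or $A_4$ sub-diagram, and the tailored form of~\eqref{cd2}--\eqref{cd3} ensures that their difference conditions coincide with those of Theorem~\ref{thm_baza} for the corresponding affine Kac--Moody subalgebra. Invoking that theorem forces $c_{a_0}=0$, contradicting the minimality of $b^{a_0}$.

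Spanning is handled as in Section~\ref{subsec55} using the quasi-particle relations \eqref{r1}--\eqref{r3}; the symmetry of \eqref{r2} in the two colors involved means the interaction condition may be imposed on either color, and placing it on the later-processed color in the ordering $i_1,\ldots,i_l$ yields the one-sided conditions \eqref{cd1}--\eqref{cd3}. The proof is then concluded by the argument of Section~\ref{subsec56}.

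I expect the main obstacle to be the bookkeeping required for the $E_8$ case: the condition \eqref{cd3} for color $i_7=6$ involves both $i_7''=5$ and $i_2=7$, so after peeling off color $i_2=7$ one must verify that the residual condition on color~$6$ matches the $D_6$-type condition~\eqref{d2} (with $i=6$, $i'=4$ in the $D_6$ labeling of Figure~\ref{figure}) under the natural identification of $\{2,3,4,5,6,8\}$ with the $D_6$ Dynkin nodes. A similar but easier check is needed in the $E_7$ and $E_6$ cases.
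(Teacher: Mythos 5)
Your linear-independence reduction has a genuine gap at the second peeling step. The only operator you allow yourself is $A_\theta=x_\theta(-1)$ for the maximal root $\theta$ of $E_l$, and since $\alpha_{i_2}(\theta^\vee)=0$, commuting $e_\theta$ through the generating functions via \eqref{eq:22} never shifts the energies of color-$i_2$ quasi-particles; the mechanism of Section \ref{subsec53} therefore cannot remove them, and Dynkin non-adjacency of $i_1$ and $i_2$ is not the property that would make such a removal possible. The paper's argument is organized differently: for $E_6$ and $E_7$ only the single color $i_1$ is peeled off, and the surviving monomials are already identified with elements of the Theorem \ref{thm_baza} bases for $A_5$, resp.\ $D_6$ --- rank $l-1$ subdiagrams, not your rank-$(l-2)$ targets $A_4$ and $D_5$. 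This is exactly the role of \eqref{cd1} for $j=2$: it makes $i_2$ the color that becomes the interaction-free ``color $1$'' of the smaller diagram (the color whose sum in \eqref{d2} is empty) under the relabeling, rather than being ``explained'' by a second removal. Only in type $E_8$ is a second peel performed, and there the intermediate monomials are recognized as elements of $\BLc^{E_7}$ (the alternative set in type $E_7$), after which the procedure is run again with the maximal root of the $E_7$ subdiagram --- a different operator from $A_{\theta_{E_8}}$, which does pair with $i_2=7$. Note also that your plan cannot be repaired for $E_7$ and $E_6$ by passing to the subalgebra's maximal root: $\theta_{D_6}$ pairs with $E_7$'s node $5$, not with $i_2=6$, and $\theta_{A_5}$ pairs with two simple roots, which breaks the single-color energy-shifting argument of Section \ref{subsec53}.

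On the spanning side you also diverge from the paper. You propose to redo Section \ref{subsec55} with the interaction term of each Dynkin edge charged to the later-processed color; this is plausible because \eqref{r2} is symmetric in the two colors, but it requires reworking the induction and is not carried out in the paper. Instead, the paper observes that the character of the set $\BLc^{E_l}$, computed as in Section \ref{sec40}, coincides with the character of the basis $\BLc$ of Theorem \ref{thm_baza}, so once linear independence is established the set is automatically a basis; no separate spanning argument via \eqref{r1}--\eqref{r3} is needed.
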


\begin{prf}
The maximal root $\theta $ in type $E$   satisfies
\beq\label{athetae}
\alpha_i(\theta^\vee) =\delta_{6i}\quad\text{for }\g=E_6
\Fand
\alpha_i(\theta^\vee) =\delta_{1i}\quad\text{for }\g=E_7,E_8.
\eeq
Therefore, as described in Section \ref{subsec54},  
by applying the procedure from Section \ref{subsec53} on an arbitrary linear combination of   elements of $\BLc^{E_8}$, 
 one can remove all quasi-particles of color $1$ from the corresponding quasi-particle monomials. 
The resulting linear combination can be  identified as a linear combination of elements of  $\BLc^{E_7}$; see Figure \ref{figure}. 
Due to \eqref{athetae}, by applying the same procedure once again,  one can remove  all quasi-particles of color $1$\footnote{
Note that the quasi-particles of color $1$ in type $E_7$ correspond, with respect to the aforementioned identification, to the quasi-particles of color $7$ in type $E_8$; see Figure \ref{figure}.
} from the corresponding quasi-particle monomials, thus obtaining  the  expression  which can be identified as a linear combination of elements of the basis $\BLc$ from Theorem \ref{thm_baza} for $\g=D_6$; see Figure \ref{figure}. 
As for type $E_6$, due to \eqref{athetae}, by applying the procedure from Section \ref{subsec53} on an arbitrary linear combination of   elements of $\BLc^{E_6}$, 
 one can remove all quasi-particles of color $6$ from the corresponding quasi-particle monomials. 
The resulting expression can be identified as a linear combination of elements of the basis $\BLc$ from Theorem \ref{thm_baza} for $\g=A_5$; see Figure \ref{figure}.
Therefore, the proposition follows from Theorem \ref{thm_baza} and the fact that the characters of the corresponding bases coincide which is verified by arguing as in Section \ref{sec40}. 
\end{prf}

\section{Character formulae and combinatorial identities}\label{sec40}

Let $\delta=\sum_{i=0}^l a_i \alpha_i$ be the imaginary root as in \cite[Chap. 5]{K}, where the integers $a_i$ denote the labels in the  Dynkin diagram \cite[Table Aff]{K} for $\gtilde$.
As before, let $V$ denote a standard module or a generalized Verma module.
Define the  character $\ch W_{V}$ of the corresponding principal subspace $W_V$ by 
 $$
 \ch W_{V}=\sum_{m,n_1,\ldots,n_l\geqslant 0} 
\dim (W_{V})_{-m\delta +n_1\alpha_1 +\ldots + n_l\alpha_l}\, q^{m}y^{n_1}_{1}\cdots y^{n_l}_{l},
$$
where $q,  y_1,\ldots, y_l$ are formal variables and $(W_{V})_{-m\delta +n_1\alpha_1 +\ldots + n_l\alpha_l}$ denote the  weight subspaces of $W_V$ of weight $-m\delta +n_1\alpha_1 +\cdots + n_l\alpha_l$ with respect to
$$\htilde =\mathfrak{h}\otimes \mathbb{C}[t,t^{-1}]\oplus \mathbb{C}c\oplus \mathbb{C}d.$$

In order to simplify  our notation, we set $\mu_i =\nu_i/\nu_{i'}$ for $i=2,\ldots ,l$; recall \eqref{niovi3}. Also, we write
$$(a;q)_r=\prod_{i=1}^r (1- aq^{i-1}) \ \ \text{for} \ \ r\geqslant 0
\Fand
(a; q)_{\infty} =\prod_{i\geqslant 1} (1- aq^{i-1}).
$$
Theorem \ref{thm_baza} implies the following character formulae:
\begin{thm}\label{thm_karakterL(kLambda0)}
{\normalfont(a)} 
Set  $n_i=\sum_{t=1}^{\nu_i k}r_i^{(t)}$  for $i=1,\ldots ,l$.
  For any integer $k\geqslant 1 $  we have
$$ \ch W_{L(k\Lambda_{0})}= 
\sum_{\substack{r_{1}^{(1)}\geqslant \cdots\geqslant r_{1}^{(\nu_1k)}\geqslant 0\vspace{-5pt}\\ \vdots\vspace{-2pt} \\r_{l}^{(1)}\geqslant \cdots\geqslant r_{l}^{(\nu_lk)}\geqslant 0}}
\frac{q^{\sum_{i=1}^l\sum_{t=1}^{\nu_{i} k}r_i^{(t)^2}
-\sum_{i=2}^{l}\sum_{t=1}^{k}
\sum_{p=0}^{\mu_i-1}r_{i'}^{(t)}
r_{i}^{\left(\mu_i t -p\right)} }} 
{\prod_{i=1}^{l}(q;q)_{r^{(1)}_{i}-r^{(2)}_{i}}\cdots (q;q)_{r^{(\nu_i k)}_{i}}}\,
\prod_{i=1}^{l}y^{n_i}_{i}.
$$
\noindent {\normalfont(b)}
Set $n_i=\sum_{t=1}^{\nu_iu_i}r_i^{(t)}$ for $i=1,\ldots ,l$.
  For any integer $k\geqslant 1 $  we have
\begin{eqnarray*}\label{karakterN(kLambda0)} 
&\ch  W_{N (k\Lambda_{0})} =\displaystyle \sum_{\substack{r_{1}^{(1)}\geqslant \cdots\geqslant r_{1}^{(\nu_1u_1)}\geqslant 0\vspace{-5pt}\\ \vdots\vspace{-2pt} \\ \substack{r_{l}^{(1)}\geqslant \cdots\geqslant r_{l}^{(\nu_lu_l)}\geqslant 
0\\  u_1, u_2, \ldots , u_l \geqslant 0}}}
 \frac{q^{\sum_{i=1}^l\sum_{t=1}^{\nu_{i} u_i}r_i^{(t)^2}-
\sum_{i=2}^{l}\sum_{t=1}^{u_i}\sum_{p=0}^{\mu_i-1}r_{i'}^{(t)}
r_{i}^{\left(\mu_i t -p\right)} }}
{\prod_{i=1}^{l}(q;q)_{r^{(1)}_{i}-r^{(2)}_{i}}\cdots (q;q)_{r^{(\nu_iu_i)}_{i}}}\prod_{i=1}^{l}y^{n_i}_{i}.  
\end{eqnarray*}
 \end{thm}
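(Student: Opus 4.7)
The approach is to enumerate the quasi-particle bases $\BVc$ provided by Theorem \ref{thm_baza} and sum $q^m y_1^{n_1}\cdots y_l^{n_l}$ directly over their elements. The strategy is standard: organize the enumeration by dual charge-type $(r_i^{(t)})$, compute the minimum total negative energy permitted by $(c_1)$ and $(c_2)$ for a given dual charge-type, enumerate the slack above this minimum as a product of ordinary partitions, and sum over all admissible dual charge-types (bounded by $(c_3)$ in the $L$-case, unbounded in the $N$-case). Since a dual charge-type is the conjugate partition of the multiset of charges, $n_{p,i} = |\{t : r_i^{(t)} \ge p\}|$, the $y$-weight reduces at once to $\prod_i y_i^{\sum_t r_i^{(t)}} = \prod_i y_i^{n_i}$, consistent with the stated $n_i$.

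For the minimum energy, I would set equality in $(c_2)$, which is automatically consistent with $(c_1)$, since $(c_1)$ is precisely the difference of consecutive $(c_2)$-bounds inside a block of equal charges. Summing the right-hand sides of $(c_2)$ produces two contributions. The diagonal contribution $\sum_{i,p}(2p-1)n_{p,i}$ converts to $\sum_{i,t} r_i^{(t)^2}$ via the partition identity $\sum_p (2p-1)n_p = \sum_{p,p'}\min(n_p,n_{p'}) = \sum_t r^{(t)^2}$. The interaction contribution $\sum_{i \ge 2}\sum_{p,q}\min\{\mu_i n_{q,i'}, n_{p,i}\}$ I would rewrite using $\min\{\mu_i n, m\} = \sum_{s \ge 1}[n \ge \lceil s/\mu_i\rceil][m \ge s]$, and then the substitution $s = \mu_i t - p$ with $p \in \{0,\ldots,\mu_i-1\}$ turns it into $\sum_{t,p} r_{i'}^{(t)} r_i^{(\mu_i t - p)}$. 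These two transformations together recover the exponent of $q$ in the theorem statement.

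For the slack above the minimum, I would parameterize by the nonnegative downward shifts of the energies from their maxima. Within each consecutive block of $k_t := r_i^{(t)} - r_i^{(t+1)}$ color-$i$ quasi-particles sharing charge $t$, condition $(c_1)$ combined with the non-strict ordering of energies makes these shifts equivalent to an ordinary partition with at most $k_t$ parts, contributing the generating function $1/(q;q)_{k_t}$. Collecting over all colors and levels gives exactly the denominator $\prod_i (q;q)_{r_i^{(1)}-r_i^{(2)}}\cdots (q;q)_{r_i^{(\nu_i k)}}$, with the boundary factor at $t = \nu_i k$ arising from $r_i^{(\nu_i k + 1)} = 0$ by $(c_3)$. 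Summing over admissible dual charge-types proves part (a). For part (b), constraint $(c_3)$ is dropped, so the length of the dual charge sequence in color $i$ is not a priori bounded; this is accommodated by the auxiliary parameter $u_i \ge 0$, whose role is exactly that of $k$ in the $L$-case, and summing over $u_i$ as an outer sum.

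The main technical obstacle is the shift-counting step: one must verify that arbitrary independent partitions attached to the equal-charge blocks produce exactly the energy sequences satisfying both $(c_1)$ and $(c_2)$, with no double-counting. The point is that the upper bound in $(c_2)$ for $m_{p,i}$ depends only on the charges of color $i'$ quasi-particles and not on their energies, so shifts in color $i'$ do not couple back to $(c_2)$ in color $i$; inside color $i$, $(c_2)$ specializes to $(c_1)$ within each equal-charge block, and between blocks of different charges the inequality $m_{p+1,i} \le m_{p,i}$ is preserved automatically once the minimum-energy configuration has been shown to be ordered. Once this bijection is in place and the two algebraic identities above are verified, both character formulae drop out as formal power series identities; all type-dependence ($D$, $E$, $F$) enters solely through the values of $\nu_i$, $\mu_i$ and $i'$ already built into the difference conditions.
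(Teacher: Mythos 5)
Your proposal is correct and follows essentially the same route as the paper's proof: for a fixed dual charge-type you convert the maximal energies permitted by \eqref{d1}--\eqref{d2} into the quadratic terms $\sum_{i,t}r_i^{(t)^2}$ and the cross terms $\sum_{t,p}r_{i'}^{(t)}r_i^{(\mu_i t-p)}$ (these are exactly the paper's equalities \eqref{uvjet1}--\eqref{uvjet5}, written there for $F_4^{(1)}$), count the remaining energy slack within each equal-charge block by partitions with at most $r_i^{(t)}-r_i^{(t+1)}$ parts via $1/(q;q)_r$, and sum over dual charge-types, with \eqref{d3} bounding the lengths in the $L(k\Lambda_0)$ case and the auxiliary parameters $u_i$ playing that role for $N(k\Lambda_0)$. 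This is the paper's argument in Section \ref{sec40}, so no further comparison is needed.
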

\begin{proof}
We  give the proof of this theorem for the case $F_4^{(1)}$, since the proof for the cases  $D_l^{(1)}$, $E_6^{(1)}$, $E_7^{(1)}$ and $E_8^{(1)}$ goes  analogously. The proof for other types can be found in \cite{G,B1,B2,B3}.
In order to determine the character of $W_{L(k\Lambda_{0})}$,   we write conditions on energies of quasi-particles of the set $B_{W_{L(k\Lambda_0)}}$  in terms of $r_i^{(s)}$. For a fixed color-type $(n_{4},n_{3},n_{2},n_{1})$, 
 charge-type
$$\quad\Cc=\left( n_{r_{4}^{(1)},4},\ldots , n_{1,4}; \,
n_{r_{3}^{(1)},3},\ldots , n_{1,3};\,
n_{r_{2}^{(1)},2},\ldots , n_{1,2};\,
 n_{r_{1}^{(1)},1},\ldots  , n_{1,1}\right)$$
and dual-charge-type
$$
 \Dc=\left(r^{(1)}_{4}, \ldots , r^{(2k)}_{4};\,
 r^{(1)}_{3}, \ldots , r^{(2k)}_{3};\,
 r^{(1)}_{2}, \ldots , r^{(k)}_{2}; \,
r^{(1)}_{1}, \ldots , r^{(k)}_{1}\right) 
$$
 the following equalities can be verified by a direct calculation:
\begin{equation}\label{uvjet1}
\sum_{p=1}^{r_{i}^{(1)}} (2(p-1)n_{p,i}+n_{p,i})= \sum_{t=1}^{k}r^{(t)^{2}}_{i}  \quad\text{for }i=1,2,\end{equation}
\begin{equation}\label{uvjet2}\sum_{p=1}^{r_{i}^{(1)}} ((2(p-1)n_{p,i}+n_{p,i})= \sum_{t=1}^{2k}r^{(t)^{2}}_{i}\quad\text{for }i=3,4,\end{equation}
\begin{equation}\label{uvjet3}
\sum_{p=1}^{r^{(1)}_{2}}\sum_{q=1}^{r^{(1)}_1}\mathrm{min}\{n_{p,2},n_{q,1}\}=\sum_{t=1}^{k}r^{(t)}_1r^{(t)}_2,\qquad
\sum_{p=1}^{r^{(1)}_{4}}\sum_{q=1}^{r^{(1)}_3}\mathrm{min}\{n_{p,4},n_{q,3}\}=\sum_{t=1}^{2k}r^{(t)}_3r^{(t)}_4,\end{equation}
\begin{equation}\label{uvjet5}\sum_{p=1}^{r^{(1)}_{3}}\sum_{q=1}^{r^{(1)}_2}\mathrm{min}\{n_{p,3},2n_{q,2}\}=\sum_{t=1}^{k}r^{(t)}_2(r^{(2t-1)}_3+r^{(2t)}_3).
\end{equation}
By combining \eqref{uvjet1}--\eqref{uvjet5}, difference conditions \eqref{d1}--\eqref{d3} and the formula
$$
\frac{1}{(q)_r}=\sum_{j\geqslant 0}p_r(j)q^j,
$$
where $p_r(j)$ denotes the number of partitions of $j$ with at most $r$ parts, we get
\begin{align}\nonumber 
\mathrm{ch} \  W_{L (k\Lambda_{0})} =& \sum_{\substack{r_{1}^{(1)}\geqslant \cdots\geqslant r_{1}^{(k)}\geqslant 0\\ r_{2}^{(1)}\geqslant \cdots\geqslant r_{2}^{(k)}\geqslant 0}}
 \frac{q^{\sum_{i=1}^{2}\sum_{t=1}^k r^{(t)^{2}}_{i}-\sum_{t=1}^kr^{(t)}_1r^{(t)}_{2}}}{\prod_{i=1}^{2}(q;q)_{r^{(1)}_{i}-r^{(2)}_{i}}\cdots (q;q)_{r^{(k)}_{i}}}\prod_{i=1}^{2}y^{n_i}_{i}\\
 \nonumber
& \times \sum_{\substack{r_{3}^{(1)}\geqslant \cdots\geqslant r_{3}^{(2k)}\geqslant 
0\\r^{(1)}_{4}\geqslant \ldots \geqslant r^{(2k)}_{4}\geqslant  0}}
 \frac{q^{\sum_{i=3}^{4}\sum_{t=1}^{2k}r^{(t)^{2}}_{i}-\sum_{t=1}^{2k}r^{(t)}_3r^{(t)}_{4}-\sum_{t=1}^kr_{2}^{(t)}(r_{3}^{(2t-1)}+r_{3}^{(2t)})}}{\prod_{i=3}^{4}(q;q)_{r^{(1)}_{i}-r^{(2)}_{i}}\cdots (q;q)_{r^{(2k)}_{i}}}\prod_{i=3}^{4}y^{n_i}_{i},& 
\end{align}
where $n_i=\sum_{t=1}^{k}r_i^{(t)}$ for $i=1,2$ and $n_i=\sum_{t=1}^{2k}r_i^{(t)}$  for $i=3,4$, as required. The character formula for the generalized Verma module is verified  analogously.
\end{proof}

Theorem \ref{thm_baza_DE} implies the following character formulae in types $D_l^{(1)}$, $E_6^{(1)}$ and $E_7^{(1)}$ while the case $A_l^{(1)}$ is due to \cite{G}.
\begin{thm}
Set  $n_i=r_i^{(1)}+\cdots +r_i^{(k)}$  for $i=1,\ldots ,l$.
  For any rectangular weight $\Lambda=k_0\Lambda_0+k_j\Lambda_j$ of level $k=k_0+k_j$   we have
$$ \ch W_{L(\Lambda)}= 
\sum_{\substack{r_{1}^{(1)}\geqslant \cdots\geqslant r_{1}^{(k)}\geqslant 0\vspace{-5pt}\\ \vdots\vspace{-2pt} \\r_{l}^{(1)}\geqslant \cdots\geqslant r_{l}^{(k)}\geqslant 0}}
\frac{q^{\sum_{i=1}^l\sum_{t=1}^{ k}r_i^{(t)^2}
-\sum_{i=2}^{l}\sum_{t=1}^{k}
r_{i'}^{(t)}
r_{i}^{\left( t \right)} +\sum_{i=1}^l\sum_{t=1}^k r_i^{(t)}\delta_{i j_t}}} 
{\prod_{i=1}^{l}(q;q)_{r^{(1)}_{i}-r^{(2)}_{i}}\cdots (q;q)_{r^{( k)}_{i}}}\,
\prod_{i=1}^{l}y^{n_i}_{i}.
$$
 \end{thm}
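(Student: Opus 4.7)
The plan is to compute $\ch W_{L(\Lambda)}$ by summing the contributions of the elements of the basis $\BLcc$ furnished by Theorem \ref{thm_baza_DE}, paralleling the derivation of $\ch W_{L(k\Lambda_{0})}$ in the simply-laced case given above (where $\nu_i=\mu_i=1$ for all $i$). The only new ingredient in \eqref{d2DE67} compared with \eqref{d2} is the extra summand $-\sum_{t=1}^{n_{p,i}}\delta_{ij_t}$, so the argument reduces to tracking this additional contribution to the total energy.

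First, I would parametrize the monomials in $\BLL$ by their dual charge-type
$$\Dc=\left(r_{l}^{(1)},\ldots,r_{l}^{(k)};\,\ldots\,;\,r_{1}^{(1)},\ldots,r_{1}^{(k)}\right)\quad\text{with}\quad r_{i}^{(1)}\geqslant\cdots\geqslant r_{i}^{(k)}\geqslant 0,$$
as in Sections \ref{subsec52} and \ref{subsec55}. The $\h$-weight factor is then $\prod_{i=1}^{l}y_i^{n_i}$ with $n_i=\sum_{t=1}^{k}r_i^{(t)}$. For fixed dual charge-type, the minimum total energy compatible with \eqref{d2DE67} is attained by saturating the inequality, and I would convert this bound into a $q$-power using the identities
$$\sum_{p=1}^{r_i^{(1)}}(2p-1)n_{p,i}=\sum_{t=1}^{k}r_i^{(t)^{2}}\qquad\text{and}\qquad \sum_{p,q}\min\{n_{q,i'},n_{p,i}\}=\sum_{t=1}^{k}r_{i'}^{(t)}r_{i}^{(t)}$$
already used in the proof of Theorem \ref{thm_karakterL(kLambda0)}, together with the new identity
$$\sum_{i=1}^{l}\sum_{p=1}^{r_i^{(1)}}\sum_{t=1}^{n_{p,i}}\delta_{ij_t}=\sum_{i=1}^{l}\sum_{t=1}^{k}r_i^{(t)}\delta_{ij_t}.$$
To verify the last identity, note that both sides vanish for $i\neq j$ since $j_t\in\{0,j\}$; for $i=j$, using that $\delta_{jj_t}=1$ iff $t>k_0$, both sides reduce to $\sum_{t=k_0+1}^{k}r_j^{(t)}$ via the conjugate-partition duality $\#\{p:n_{p,j}\geqslant t\}=r_j^{(t)}$.

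Finally, I would account for the freedom above the minimum total energy by invoking \eqref{d1}: at each color $i$ and each level $t$, the $r_i^{(t)}-r_i^{(t+1)}$ quasi-particles of charge $t$ and color $i$ contribute a partition with at most $r_i^{(t)}-r_i^{(t+1)}$ parts (setting $r_i^{(k+1)}:=0$), which yields the denominator via $\tfrac{1}{(q;q)_r}=\sum_{j\geqslant 0}p_r(j)q^j$ exactly as in Theorem \ref{thm_karakterL(kLambda0)}. Summing over admissible dual charge-types then produces the stated formula. The main technical point is the verification of the new $\delta_{ij_t}$ identity; after that, the bookkeeping is routine, since the additional term in \eqref{d2DE67} depends only on the charges and does not interfere with the separation between charge- and energy-degrees of freedom that underlies the Georgiev-type computation.
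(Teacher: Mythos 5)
Your proposal is correct and follows essentially the same route the paper takes: the paper gives no separate argument for this theorem beyond noting it follows from Theorem \ref{thm_baza_DE} by the same dual-charge-type counting carried out in the proof of Theorem \ref{thm_karakterL(kLambda0)}, and your verification of the extra identity $\sum_{i,p}\sum_{t=1}^{n_{p,i}}\delta_{ij_t}=\sum_{i,t}r_i^{(t)}\delta_{ij_t}$ is exactly the additional bookkeeping needed for the term coming from \eqref{d2DE67}.
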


Note that from  \eqref{mapbar}  we have an isomorphism of $\ntplus$-modules $W_{N(k\Lambda_{0})}$ and $U(\ntplusleq)$, so we can   obtain character formula of $W_{N(k\Lambda_{0})}$ by using Poincar\'{e}--Birkhoff--Witt basis of $U(\ntplusleq)$ as well. For example, in the case $F_4^{(1)}$, we get
\begin{align}\label{t3:2} 
\ch  W_{N (k\Lambda_{0})} =\,\,&
\frac{1}{(qy_1, qy_1y_2,qy_1y_2y_3 , qy_1y_2 y_3 y_4,qy_2,qy_2y_3, qy_2y_3y_4,qy_2y_3^2; q)_{\infty}}\\
&\hspace{-60pt}\times \frac{1}{(qy_1y_2y_3^2, qy_1y_2y_3^2 y_4, qy_1y_2 y_3^2y_4^2,qy_1y_2^2y_3^2, qy_1y_2^2y_3^2y_4, qy_1y_2^2y_3^2y_4^2,qy_3,qy_2y_3^2y_4; q)_{\infty}}\non\\
 &\hspace{-60pt}\times \frac{1}{(qy_1y_2^2y_3^3y_4,qy_1y_2^2y_3^3y_4^2,qy_1y_2^2y_3^4y_4^2, qy_1y_2^3y_3^4y_4^2, qy_1^2y_2^3y_3^4y_4^2, qy_3y_4, qy_2y_3^2y_4^2, qy_4; q)_{\infty}}\non,
\end{align}
where
\begin{equation*}(a_1, \ldots, a_n; q)_{\infty} \coloneqq (a_1; q)_{\infty} \cdots (a_n; q)_{\infty}.\end{equation*}
For any  positive root $\alpha =a_1\alpha_1+\cdots a_l\alpha_l\in R_+$ we introduce the following notation
\begin{equation*}\label{oznakazaidentitet}
(\alpha; q)_{\infty}=(qy_1^{a_1},\ldots, qy_l^{a_l};q)_{\infty},
\end{equation*}
so that for an arbitrary affine Lie algebra $\gtilde$  character formula \eqref{t3:2} generalizes to
\beq\label{karakter}
\ch  W_{N (k\Lambda_{0})} = \frac{1} {\prod_{\alpha \in R_+}(\alpha; q)_{\infty} }.
\eeq
Theorem \ref{thm_karakterL(kLambda0)} and \eqref{karakter} imply the following generalization of Euler--Cauchy
theorem; cf. \cite{A}.
\begin{thm}\label{thm_identitet} 
For any untwisted  affine Lie algebra $\gtilde$ we have
\begin{eqnarray*}
   \displaystyle\frac{1}{\prod_{\alpha \in R_+}(\alpha; q)_{\infty} } = \displaystyle\sum_{\substack{r_{1}^{(1)}\geqslant \cdots \geqslant r_{1}^{(m)} \geqslant \cdots \geqslant 0\vspace{-5pt}\\ \hspace{-7pt}\vdots\vspace{-2pt} \\ \substack{r_{l}^{(1)}\geqslant \cdots \geqslant r_{l}^{(m)} \geqslant \cdots \geqslant 0}}}
 \frac{q^{\sum_{i=1}^l\sum_{t\geqslant 1} r_i^{(t)^2}-\sum_{i=2}^{l}\sum_{t\geqslant 1} \sum_{p=0}^{\mu_i-1}r_{i'}^{(t)}
r_{i}^{\left(\mu_i t -p\right)} }}{\prod_{i=1}^{l}\prod_{j\geqslant 1} (q;q)_{r^{(j)}_{i}-r^{(j+1)}_{i}}}\prod_{i=1}^{l}y^{n_i}_{i},
\end{eqnarray*}
where $n_i=\sum_{t\geqslant 1}r_i^{(t)}$ for $i=1,\ldots ,l$ and the sum on the right hand side of goes over all descending infinite sequences of nonnegative integers with finite support.
\end{thm}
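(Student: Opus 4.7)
The plan is to combine the two expressions already available for the character $\ch \WN$. On one side, the Poincar\'e--Birkhoff--Witt isomorphism $\WN \cong U(\ntplusleq)$ directly yields the product formula \eqref{karakter}, with one factor $(\alpha;q)_\infty^{-1}$ for each positive root $\alpha \in R_+$. On the other side, Theorem \ref{thm_baza} supplies the quasi-particle basis $\BNc$, and summing the monomial contributions subject to the difference conditions \eqref{d1} and \eqref{d2} gives the sum formula in Theorem \ref{thm_karakterL(kLambda0)}(b). Equating these two expressions for $\ch \WN$ immediately produces a product--sum identity, and the remaining task is purely combinatorial: bring the right-hand side into the form stated in Theorem \ref{thm_identitet}.

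To this end, I would set up a length-extension bijection between the summation index of Theorem \ref{thm_karakterL(kLambda0)}(b) and the index in Theorem \ref{thm_identitet}, namely $l$ infinite weakly descending sequences of nonnegative integers with finite support. Given a tuple $(u_1,\ldots,u_l; (r_i^{(t)})_{1\leqslant t\leqslant \nu_i u_i})$, extend each finite sequence to an infinite one by setting $r_i^{(t)} = 0$ for $t > \nu_i u_i$. Under this correspondence the denominator $(q;q)_{r_i^{(1)} - r_i^{(2)}} \cdots (q;q)_{r_i^{(\nu_i u_i)}}$ becomes the infinite product $\prod_{j\geqslant 1}(q;q)_{r_i^{(j)} - r_i^{(j+1)}}$, since every new factor equals $(q;q)_0 = 1$, and the diagonal quadratic term $\sum_{t=1}^{\nu_i u_i} r_i^{(t)^2}$ extends trivially to $\sum_{t\geqslant 1} r_i^{(t)^2}$.

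The one step worth checking, and the main piece of bookkeeping in the proof, is the interaction term $\sum_{t=1}^{u_i}\sum_{p=0}^{\mu_i-1}r_{i'}^{(t)}r_i^{(\mu_i t - p)}$, whose outer cut-off $u_i$ is tied to the color $i$ rather than to the length of the $r_{i'}$ sequence. Using $\mu_i = \nu_i/\nu_{i'}$, one verifies that the index $\mu_i t - p$ remains within the original range of $r_i$, and using the finite support of the extended $r_{i'}$-sequence, the outer bound $u_i$ can be replaced by $\infty$ without altering the sum. After this cosmetic rewriting the sum takes the form on the right-hand side of Theorem \ref{thm_identitet}, and comparing with the product \eqref{karakter} completes the proof. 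The resulting identity is $k$-independent, consistent with the fact that $\ch \WN$, expressed in the variables $q, y_1, \ldots, y_l$, does not depend on the level $k$ at which $c$ acts on $\NNN$.
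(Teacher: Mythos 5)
Your proposal is correct and takes essentially the same route as the paper: the paper also obtains the identity simply by equating the quasi-particle sum formula for $\ch W_{N(k\Lambda_{0})}$ from Theorem \ref{thm_karakterL(kLambda0)}(b) with the Poincar\'e--Birkhoff--Witt product formula \eqref{karakter}. The zero-padding of the finite dual-charge data into infinite descending sequences with finite support, which you spell out, is precisely the bookkeeping the paper leaves implicit.
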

In particular, the theorem produces three new families of combinatorial identities which correspond to types $D$, $E$ and $F$.

\section*{Acknowledgement}
The authors would like to thank Mirko Primc for useful discussions and support.
 The first author is partially supported by the QuantiXLie Centre of Excellence, a project cofinanced by the Croatian Government and European Union through the European Regional Development Fund - the Competitiveness and Cohesion Operational Programme  (Grant KK.01.1.1.01.0004).

\linespread{1.0}

\end{document}